\newtheorem{theorem}{Theorem}
\newtheorem{definition}[theorem]{Definition}
\newtheorem{example}[theorem]{Example}
\newtheorem{lemma}[theorem]{Lemma}
\newtheorem{proposition}[theorem]{Proposition}
\newtheorem{remark}[theorem]{Remark}
\newenvironment{proof}{{\bf Proof:} }{$\Box$\mbox{}}
\begin{document}
\begin{titlepage}
  \title
  {\bf THREE CROSSED MODULES}

  \author
  { Z. ARVAS\.{I}, T. S. KUZPINARI and E. \"{O}. USLU   \\
   \mbox{} \\
 }
\date{}
\end{titlepage}

\maketitle

\begin{abstract}
We introduce the notion of 3-crossed module, which extends the
notions of 1-crossed module (Whitehead) and 2-crossed module
(Conduch\'{e}). We show that the category of 3-crossed modules is
equivalent to the category of simplicial groups having a Moore
complex of length 3. We make explicit the relationship with the
cat$^{3}$-groups (Loday) and the 3-hyper-complexes
(Cegarra-Carrasco), which also model algebraically homotopy 4-types.
\end{abstract}
\begin{center}
\begin{tabular}{llllll}
{\tt Keywords:} & Crossed module, 2-crossed module,  \\
                & Simplicial group, Moore complex.\\
{\tt A. M. S. C.:} & 18D35 18G30 18G50 18G55.
\end{tabular}
\end{center}
\maketitle
\section{Introduction}

Crossed modules (or 1-crossed modules) were first defined by
Whitehead in
\cite{jh1}. They model connected homotopy $2$-types. Conduch\'{e} \cite%
{conduche} in 1984 described the notion of $2$-crossed module as a
model of connected $3$-types. More generally Loday, \cite{jl}, gave
the foundation of a theory of another algebraic model, which is
called cat$^{n}$-groups, for connected $(n+1)$-types. Ellis-Steiner
\cite{es} shown that cat$^{n}$-groups are equivalent to crossed
$n$-cubes. A link between simplicial groups and crossed $n$-cubes
were given by Porter \cite{p6}. Conduch\'{e} \cite{cond}
gives a relation between crossed $2$-cubes (i.e. crossed squares) and $2$%
-crossed modules. $2$-crossed modules were known to be equivalent to
that of simplicial groups whose Moore complex has length $2.$ In
\cite{baues,baues2} Baues introduced a related notion of quadratic
module. The first author and Ulualan \cite{zarvasi} also explored
some relations among these algebraic models for (connected) homotopy
$3$-types.

The most general investigation into the extra structure of the Moore
complex of a simplicial group was given by Carrasco-Cegarra in
\cite{c} to construct the Non-Abelian version of the classical
Dold-Kan theorem. A much more general context of their work was
given by Bourn in \cite{bourn}. Carrasco and Cegarra arrived at a
notion of a hypercrossed complexes and proved that the category of
such hypercrossed complexes is equivalent to that of simplicial
groups. If one truncates hypercrossed complexes at level $n$,
throwing away terms of higher dimension, the resulting
$n$-hypercrossed complexes form a category equivalent to the
$n$-hyper groupoids of groups given by Duskin, \cite{duskin}, Glenn
\cite{glenn} and give algebraic models for $n$-types. For $n=1$, a
$1$-hypercrossed complex gives a crossed module, whilst a
subcategory of the category of hypercrossed $2$-complexes is
equivalent to Conduche's category of $2$-crossed modules.

Mutlu-Porter, \cite{amut4}, introduced a Peiffer pairing structure
within the Moore complexes of a simplicial group. They applied this
structure to the study of algebraic models for homotopy types.

In this article we will define the notion of $3$-crossed module as a
model for homotopy $4$-types. The methods we use are based on ideas
of Conduch\'{e} given in \cite{conduche} and a Peiffer pairing
structure within the Moore complexes of a simplicial group. We prove
that the category of 3-crossed modules is equivalent to that of
simplicial groups with Moore complex of length 3 which is equivalent
to that of 3-hypercrossed complexes. The main problem with the
$3$-hypercrossed complex is difficult to handle intuitively.

The advantage of the notion of $3$-crossed module is the following;

\begin{enumerate}
\item[(i)] A new algebraic model for (connected) homotopy $4$-types;

\item[(ii)] It is easy to handle with respect to other models such as the $3$%
-hypercrossed complex;

\item[(iii)] Give a possible way  to generalising $n$-crossed modules
(or equivalently $n$-groups (see \cite{robert} )) which is analogues to a $n$%
-hypercrossed complex.

\item[(iv)] In \cite{baues2}, Baues points out that a
\textquotedblleft nilpotent\textquotedblright algebraic model for
4-types is not known. $3$-crossed modules go some way towards that
aim.
\end{enumerate}

\section{Simplicial groups, Moore Complexes, Peiffer pairings}

We refer the reader to \cite{may} and  \cite{curtis} for the basic
properties of simplicial structures.

\subsection{\textbf{Simplicial Groups }}

A simplicial group $\mathbf{G}$ consists of a family of groups
$\left\{
G_{n}\right\} $ together with face and degeneracy maps $d_{i}^{n}:G_{n}%
\rightarrow G_{n-1}$, $0\leq i\leq n$, $(n\neq 0)$ and $s_{i}^{n}:G_{n-1}%
\rightarrow G_{n},0\leq i\leq n$, satisfying the usual simplicial
identities given in \cite{may}, \cite{curtis}. The category of
simplicial groups will be denoted by $\mathbf{SimpGrp}$.

Let $\Delta $ denotes the category of finite ordinals. For each
$k\geq 0$ we
obtain a subcategory $\Delta _{\leq k}$ determined by the objects $\left[ i%
\right] $ of $\Delta $ with $i\leq k$. A $k$-truncated simplicial
group is a functor from $\Delta _{\leq k}^{op}$ to $\mathbf{Grp}$
(the category of
groups). We will denote the category of $k$-truncated simplicial groups by $%
\mathbf{Tr}_{k}\mathbf{SimpGrp}\mathfrak{.}$ By a $k$-$truncation$ $of$ $a$ $%
simplicial$ $group,$ we mean a $k$-truncated simplicial group $\mathbf{tr}%
_{k}\mathbf{G}$ obtained by forgetting dimensions of order $>k$ in a
simplicial group $\mathbf{G.}$ Then we have the adjoint situation

\begin{center}
\begin{center}
$\xymatrix{  \mathbf{SimpGrp} \ar@{->}@<2pt>[rr]^-{\mathbf{tr}_{k}}
                 & &\ar@{->}@<2pt>[ll]^-{\mathbf{st}_{k}}
 \mathbf{Tr}_{k} \mathbf{SimpGrp}
  }$
\end{center}%
\end{center}

\noindent where $\mathbf{st}_{k}$ is called the $k$-skeleton
functor. For detailed definitions see \cite{duskin}.

\subsection{\textbf{The Moore Complex}.}

The Moore complex $\mathbf{NG}$\ of a simplicial group $\mathbf{G}$\
is
defined to be the normal chain complex $(\mathbf{NG,\partial })$ with%
\begin{equation*}
NG_{n}=\bigcap\limits_{i=0}^{n-1}Kerd_{i}
\end{equation*}%
and with differential $\partial _{n}:NG_{n}\rightarrow NG_{n-1}$
induced from $d_{n}$ by restriction.

The n$^{th}$ \textit{homotopy group }$\pi _{n}$($\mathbf{G}$) of
$\mathbf{G}$ is the n$^{th}$ homology of the Moore complex of
$\mathbf{G}$, i.e.
\begin{equation*}
\pi _{n}(\mathbf{G})\cong H_{n}(\mathbf{NG},\partial
)=\bigcap_{i=0}^{n}\ker
d_{i}^{n}/d_{n+1}^{n+1}(\bigcap_{i=0}^{n}\ker d_{i}^{n+1}).
\end{equation*}

We say that the Moore complex $\mathbf{NG}$ of a simplicial group
$\mathbf{G} $ is of \textit{length k} if $\mathbf{NG}_{n}=1$ for all
$n\geq k+1$. We denote the category of simplicial groups with Moore
complex of length $k$ by $\mathbf{SimpGrp}_{\leq k}.$

The Moore complex, $\mathbf{NG}$, carries a hypercrossed complex
structure (see Carrasco \cite{c} ) from which $\mathbf{G}$ can be
rebuilt. We recall briefly some of the aspects of this
reconstructions that we will need later.

\subsection{\textbf{The Poset of Surjective Maps}}

The following notation and terminology is derived from \cite{cc}.

For the ordered set $[n]=\{0<1<\dots <n\}$, let $\alpha
_{i}^{n}:[n+1]\rightarrow \lbrack n]$ be the increasing surjective
map given by;
\begin{equation*}
\alpha _{i}^{n}(j)=\left\{
\begin{array}{ll}
j & \text{if }j\leq i, \\
j-1 & \text{if }j>i.%
\end{array}%
\right.
\end{equation*}%
Let $S(n,n-r)$ be the set of all monotone increasing surjective maps from $%
[n]$ to $[n-r]$. This can be generated from the various $\alpha
_{i}^{n}$ by composition. The composition of these generating maps
is subject to the following rule: $\alpha _{j}\alpha _{i}=\alpha
_{i-1}\alpha _{j},j<i$. This
implies that every element $\alpha \in S(n,n-r)$ has a unique expression as $%
\alpha =\alpha _{i_{1}}\circ \alpha _{i_{2}}\circ \dots \circ \alpha
_{i_{r}} $ with $0\leq i_{1}<i_{2}<\dots <i_{r}\leq n-1$, where the indices $%
i_{k}$ are the elements of $[n]$ such that $\{i_{1},\dots
,i_{r}\}=\{i:\alpha (i)=\alpha (i+1)\}$. We thus can identify
$S(n,n-r)$ with the set $\{(i_{r},\dots ,i_{1}):0\leq
i_{1}<i_{2}<\dots <i_{r}\leq n-1\} $. In particular, the single
element of $S(n,n)$, defined by the
identity map on $[n]$, corresponds to the empty 0-tuple ( ) denoted by $%
\emptyset _{n} $. Similarly the only element of $S(n,0)$ is
$(n-1,n-2,\dots ,0)$. For all $n\geq 0$, let
\begin{equation*}
S(n)=\bigcup_{0\leq r\leq n}S(n,n-r).
\end{equation*}%
We say that $\alpha =(i_{r},\dots ,i_{1})<\beta =(j_{s},\dots ,j_{1})$ in $%
S(n)$ if $i_{1}=j_{1},\dots ,i_{k}=j_{k}$ but $i_{k+1}>j_{k+1},$
$(k\geq 0)$ or if $i_{1}=j_{1},\dots ,i_{r}=j_{r}$ and $r<s$. This
makes $S(n)$ an ordered set. For example

\begin{eqnarray*}
S(2) &=&\{\phi _{2}<(1)<(0)<(1,0)\} \\
S(3) &=&\{\phi _{3}<(2)<(1)<(2,1)<(0)<(2,0)<(1,0)<(2,1,0)\} \\
S(4) &=&\{\phi _{4}<(3)<(2)<(3,2)<(1)<(3,1)<(2,1)<(3,2,1)<(0)<(3,0)<(2,0) \\
&<&(3,2,0)<(1,0)<(3,1,0)<(2,1,0)<(3,2,1,0)\}
\end{eqnarray*}

\subsection{The Semidirect Decomposition of a Simplicial Group}

The fundamental idea behind this can be found in Conduch\'{e} \cite{conduche}%
. A detailed investigation of this for the case of simplicial groups
is given in Carrasco and Cegarra \cite{c}.

Given a split extension of groups

\begin{center}
$\xymatrix@R=40pt@C=40pt{
  1 \ar[r] & K \ar[r] & G \ar@{->}@<3pt>[r]^-{d} & P \ar@{->}@<3pt>[l]^-{s} \ar[r] & 1     }$
\end{center}

\noindent we write $G\cong K\rtimes s(P)$, the semidirect product of
the normal subgroup, $K$, with the image of $P$ under the splitting
$s$.

\begin{proposition}
If \textbf{G} is a simplicial group, then for any $n\geq 0$%
\begin{equation*}
\begin{array}{lll}
G_n & \cong & (\ldots (NG_n \rtimes s_{n-1}NG_{n-1})\rtimes \ldots
\rtimes
s_{n-2}\ldots s_0NG_1)\rtimes \\
&  & \qquad (\ldots (s_{n-2}NG_{n-1}\rtimes
s_{n-1}s_{n-2}NG_{n-2})\rtimes
\ldots \rtimes s_{n-1}s_{n-2}\dots s_0NG_0). ~~%
\end{array}%
\end{equation*}
\end{proposition}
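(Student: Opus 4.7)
The plan is to prove the decomposition by iterating a basic splitting lemma. Define the nested kernels
\[
L_n^k := \bigcap_{i=0}^{k} \ker d_i^n \qquad (-1 \leq k \leq n-1),
\]
with the conventions $L_n^{-1} = G_n$ and $L_n^{n-1} = NG_n$. Using the simplicial identities $d_k s_k = \mathrm{id}$ and $d_i s_k = s_{k-1} d_i$ for $i < k$, one checks that the face map $d_k^n$ restricts to a surjection $L_n^{k-1} \to L_{n-1}^{k-1}$ with kernel $L_n^k$, and that the degeneracy $s_k^{n-1}$ restricts to a section $L_{n-1}^{k-1} \to L_n^{k-1}$. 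This yields the split extension
\[
L_n^{k-1} \;\cong\; L_n^k \;\rtimes\; s_k^{n-1}\bigl(L_{n-1}^{k-1}\bigr).
\]

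Iterating this identity for $k = 0, 1, \ldots, n-1$ produces the outer iterated semidirect product
\[
G_n \;\cong\; \bigl(\cdots (NG_n \rtimes s_{n-1} NG_{n-1}) \rtimes \cdots \rtimes s_1 L_{n-1}^{0}\bigr) \rtimes s_0\, G_{n-1}.
\]
Proceeding by induction on $n$ (the case $n = 0$ being trivial, since $G_0 = NG_0$), one applies the inductive hypothesis to decompose each $L_{n-1}^{k-1}$; by the same recursion applied one dimension down, each nested kernel admits an analogous iterated decomposition. After unravelling, every factor has the shape of an iterated degeneracy applied to some $NG_m$, and bringing the composite of degeneracies into normal form via the commutation rule $s_i s_j = s_{j+1} s_i$ (for $i \leq j$) turns each factor into $s_\alpha NG_{n-|\alpha|}$ for a unique $\alpha \in S(n)$.

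The main obstacle lies in the combinatorial bookkeeping: one must verify that this assignment produces each $\alpha \in S(n)$ exactly once and enumerates them in the order specified on $S(n)$, and that the semidirect product structure is preserved at every stage. Normality on the outer axis is immediate, since each $L_n^k$ is the kernel of a split surjection in the recursion $L_n^{k-1} \cong L_n^k \rtimes s_k L_{n-1}^{k-1}$; its propagation through the inner decompositions is a formal but careful consequence of the simplicial identities together with the structure of the ordering on $S(n)$. In essence this is the non-abelian Dold--Kan correspondence of Carrasco and Cegarra.
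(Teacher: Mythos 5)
Your proposal is correct and is essentially the argument the paper intends: the paper's proof is just ``repeatedly use'' the splitting lemma $G_n \cong \ker d_n^n \rtimes s_{n-1}^{n-1}(G_{n-1})$, and your nested-kernel recursion $L_n^{k-1}\cong L_n^{k}\rtimes s_k(L_{n-1}^{k-1})$ is the same principle (a face map split by a degeneracy section), iterated in the order that directly reproduces the displayed bracketing, whose outermost factor is $s_0(\hbox{decomposition of } G_{n-1})$. The normalization of degeneracy composites via $s_is_j=s_{j+1}s_i$ for $i\leq j$, which you leave as bookkeeping, is exactly the Carrasco--Cegarra combinatorics that the paper also leaves implicit.
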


\begin{proof}
This is done by repeatedly use of the following lemma.
\end{proof}

\begin{lemma}
Let \textbf{G} be a simplicial group. Then $G_{n}$ can be decomposed
as a semidirect product:
\begin{equation*}
G_{n}\cong \mathrm{Ker}d_{n}^{n}\rtimes s_{n-1}^{n-1}(G_{n-1}).
\end{equation*}
\end{lemma}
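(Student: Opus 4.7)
The plan is to exhibit a split short exact sequence of groups and invoke the general split-extension principle recalled just before the lemma. Concretely, I take the face map
\[
d_n^n : G_n \longrightarrow G_{n-1}
\]
together with the degeneracy $s_{n-1}^{n-1} : G_{n-1} \to G_n$ as candidate splitting. All three maps are group homomorphisms by definition of a simplicial group, so the algebraic setup needed for the semidirect product decomposition is already in place.

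The first step is to verify $d_n^n \circ s_{n-1}^{n-1} = \mathrm{id}_{G_{n-1}}$, which is an immediate consequence of the simplicial identity $d_i s_i = \mathrm{id}$ (in the specific case $i = n-1$, giving $d_{n-1}^n s_{n-1}^{n-1} = \mathrm{id}$) together with the identity $d_n s_{n-1} = \mathrm{id}$; here I simply use the version $d_n^n s_{n-1}^{n-1} = \mathrm{id}$. This has two consequences: $d_n^n$ is surjective, and $s_{n-1}^{n-1}$ is injective, so its image is a subgroup of $G_n$ isomorphic to $G_{n-1}$. By the first isomorphism theorem, $\mathrm{Ker}\, d_n^n$ is a normal subgroup of $G_n$, and we obtain the split extension
\[
1 \longrightarrow \mathrm{Ker}\, d_n^n \longrightarrow G_n \xrightarrow{\ d_n^n\ } G_{n-1} \longrightarrow 1
\]
with section $s_{n-1}^{n-1}$.

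The second step is to apply the convention $G \cong K \rtimes s(P)$ stated in the paragraph preceding the lemma. Explicitly, any $g \in G_n$ decomposes uniquely as
\[
g = \bigl(g \cdot s_{n-1}^{n-1}(d_n^n(g))^{-1}\bigr) \cdot s_{n-1}^{n-1}(d_n^n(g)),
\]
where the first factor lies in $\mathrm{Ker}\, d_n^n$ (apply $d_n^n$ and use the identity from the first step) and the second in $s_{n-1}^{n-1}(G_{n-1})$. Uniqueness follows because the two subgroups intersect trivially: an element of the image of $s_{n-1}^{n-1}$ in $\mathrm{Ker}\, d_n^n$ would be $s_{n-1}^{n-1}(h)$ with $d_n^n s_{n-1}^{n-1}(h) = h = 1$.

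There is no real obstacle here; the only thing that requires care is the bookkeeping of the simplicial identity used to split $d_n^n$, and the observation that the decomposition is genuinely an \emph{internal} semidirect product so that the conclusion is an isomorphism of groups and not merely a bijection. Once these routine checks are made, the lemma follows, and the proposition is then obtained by iterating the lemma on the factor $s_{n-1}^{n-1}(G_{n-1}) \cong G_{n-1}$, unwinding degeneracies at each stage.
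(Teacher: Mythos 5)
Your proof is correct and is exactly the argument the paper intends: the paper states this lemma without proof, relying on the split-extension convention $G\cong K\rtimes s(P)$ established just beforehand, and your verification via the simplicial identity $d_n s_{n-1}=\mathrm{id}$ plus the internal decomposition $g=(g\,s_{n-1}d_n(g)^{-1})\,s_{n-1}d_n(g)$ is the standard way to fill it in. The only cosmetic point is that the identity $d_{n-1}s_{n-1}=\mathrm{id}$ you mention in passing is not needed; $d_n s_{n-1}=\mathrm{id}$ alone does the job.
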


The bracketing and the order of terms in this multiple semidirect
product are generated by the sequence:

\begin{equation*}
\begin{array}{lll}
G_1 & \cong & NG_1 \rtimes s_0NG_0 \\
G_2 & \cong & (NG_2 \rtimes s_1NG_1)\rtimes (s_0NG_1\rtimes s_1s_0NG_0) \\
G_3 & \cong & ((NG_3 \rtimes s_2NG_2)\rtimes (s_1NG_2\rtimes
s_2s_1NG_1))\rtimes \\
&  & \qquad \qquad \qquad ((s_0NG_2\rtimes s_2s_0NG_1)\rtimes
(s_1s_0NG_1\rtimes s_2s_1s_0NG_0)).%
\end{array}%
\end{equation*}
and%
\begin{equation*}
\begin{array}{lll}
G_4 & \cong & (((NG_4 \rtimes s_3NG_3)\rtimes (s_2NG_3\rtimes
s_3s_2NG_2))\rtimes \\
&  & \qquad \ ((s_1NG_3 \rtimes s_3s_1NG_2)
\rtimes(s_2s_1NG_2\rtimes
s_3s_2s_1NG_1)))\rtimes \\
&  & \qquad \qquad s_0( \mbox{\rm decomposition of }G_3).%
\end{array}%
\end{equation*}

Note that the term corresponding to $\alpha =(i_r,\ldots ,i_1)\in S(n)$ is%
\begin{equation*}
s_\alpha (NG_{n-\#\alpha })=s_{i_r...i_1}(NG_{n-\#\alpha
})=s_{i_r}...s_{i_1}(NG_{n-\#\alpha }),
\end{equation*}
where $\#\alpha =r.$ Hence any element $x\in G_n$ can be written in the form%
\begin{equation*}
x=y\prod\limits_{\alpha \in S(n)}s_\alpha (x_\alpha )\mbox{\rm  \qquad with }%
y\in NG_n\mbox{\rm \ and }x_\alpha \in NG_{n-\#\alpha }.
\end{equation*}

\subsection{\textbf{Hypercrossed Complex Pairings}}

In the following we recall from \cite{amut4} hypercrossed complex
pairings. The fundamental idea behind this can be found in Carrasco
and Cegarra (cf.
\cite{c}). The construction depends on a variety of sources, mainly Conduch%
\'{e} \cite{conduche}, Mutlu and Porter \cite{amut4}. Define a set
$P(n)$ consisting of pairs of elements $(\alpha ,\beta )$ from
$S(n)$ with $\alpha \cap \beta =\emptyset $ and $\beta <\alpha $ ,
with respect to lexicographic ordering in $S(n)$ where $\alpha
=(i_{r},\dots ,i_{1}),\beta =(j_{s},\dots ,j_{1})\in S(n)$. The
pairings that we will need,
\begin{equation*}
\{F_{\alpha ,\beta }:NG_{n-\sharp \alpha }\times NG_{n-\sharp \beta
}\rightarrow NG_{n}:(\alpha ,\beta )\in P(n),n\geq 0\}
\end{equation*}%
are given as composites by the diagram

\begin{center}
$\xymatrix@R=40pt@C=60pt{\ NG_{n-\#\alpha}\times NG_{n-\#\beta} \ar[d]%
_{s_{\alpha}\times s_{\beta}} \ar[r]^-{F_{\alpha ,\beta}} & NG_n \\
G_n \times G_n \ar[r]_{\mu} & G_n \ar[u]_{p} }$
\end{center}

\noindent where $s_{\alpha }=s_{i_{r}},\dots ,s_{i_{1}}:NG_{n-\sharp
\alpha }\rightarrow G_{n},$\quad $s_{\beta }=s_{j_{s}},\dots
,s_{j_{1}}:NG_{n-\sharp \beta }\rightarrow G_{n},$
$p:G_{n}\rightarrow NG_{n} $ is defined by composite projections
$p(x)=p_{n-1}\dots p_{0}(x),$ where $p_{j}(z)=zs_{j}d_{j}(z)^{-1}$
with $j=0,1,\dots ,n-1.$ $\mu :G_{n}\times G_{n}\rightarrow G_{n}$
is given by commutator map and $\sharp \alpha $ is the number of the
elements in the set of $\alpha ,$ similarly
for $\sharp \beta .$ Thus%
\begin{eqnarray*}
F_{\alpha ,\beta }(x_{\alpha },y_{\beta }) &=&p\mu \lbrack
(s_{\alpha
}\times s_{\beta })(x_{\alpha },x_{\beta })] \\
&=&p[(s_{\alpha }x_{\alpha }\times s_{\beta }x_{\beta })]
\end{eqnarray*}

Let $N_{n}$ be the normal subgroup of $G_{n}$ generated by elements
of the form
\begin{equation*}
F_{\alpha ,\beta }(x_{\alpha },y_{\beta })
\end{equation*}%
where $x_{\alpha }\in NG_{n-\sharp \alpha }$ and $y_{\beta }\in
NG_{n-\sharp \beta }.$

We illustrate this subgroup for $n=3$ and $n=4$ as follows:

For $n=3$, the possible Peiffer pairings are the following

\begin{center}
$F_{(1,0)(2)}$, $F_{(2,0)(1)}$, $F_{(0)(2,1)}$, $F_{(0)(2)}$, $F_{(1)(2)}$, $%
F_{(0)(1)}$
\end{center}

For all $x_{1}\in NG_{1},y_{2}\in NG_{2},$ the corresponding generators of $%
N_{3}$ are:
\begin{align*}
F_{(1,0)(2)}(x_{1},y_{2})
&=[s_{1}s_{0}x_{1},s_{2}y_{2}][s_{2}y_{2},s_{2}s_{0}x_{1}], \\
F_{(2,0)(1)}(x_{1},y_{2})
&=[s_{2}s_{0}x_{1},s_{1}y_{2}][s_{1}y_{2},s_{2}s_{1}x_{1}][s_{2}s_{1}x_{1},s_{2}y_{2}][s_{2}y_{2},s_{2}s_{0}x_{1}]
\\
\intertext{ and for all $x_{2}\in NG_{2},y_{1}\in NG_{1},$}
F_{(0)(2,1)}(x_{2},y_{1})&=[s_{0}x_{2},s_{2}s_{1}y_{1}][s_{2}s_{1}y_{1},s_{1}x_{2}][s_{2}x_{2},s_{2}s_{1}y_{1}]
\\
\intertext{ whilst for all $x_{2},y_{2}\in NG_{2},$}
F_{(0)(1)}(x_{2},y_{2})
&=[s_{0}x_{2},s_{1}y_{2}][s_{1}y_{2},s_{1}x_{2}][s_{2}x_{2},s_{2}y_{2}], \\
F_{(0)(2)}(x_{2},y_{2}) &=[s_{0}x_{2},s_{2}y_{2}], \\
F_{(1)(2)}(x_{2},y_{2})
&=[s_{1}x_{2},s_{2}y_{2}][s_{2}y_{2},s_{2}x_{2}].
\end{align*}

For $n=4$, the key pairings are thus the following

\begin{center}
\begin{tabular}{lllll}
$F_{(0)(3,2,1)},$ & $F_{(3,2,0)(1)},$ & $F_{(3,1,0)(2)},$ &
$F_{(2,1,0)(3)},$
& $F_{(3,0)(2,1)},$ \\
$F_{(2,0)(3,1)},$ & $F_{(1,0)(3,2)},$ & $F_{(1)(3,2)},$ & $F_{(0)(3,2)},$ & $%
F_{(0)(3,1)},$ \\
$F_{(0)(2,1)},$ & $F_{(3,1)(2)},$ & $F_{(2,1)(3)},$ & $F_{(3,0)(2)},$ & $%
F_{(3,0)(1)},$ \\
$F_{(2,0)(3)},$ & $F_{(2,0)(1)},$ & $F_{(1,0)(3)},$ & $F_{(1,0)(2)},$ & $%
F_{(2)(3)},$ \\
$F_{(1)(3)},$ & $F_{(0)(3)},$ & $F_{(1)(2)},$ & $F_{(0)(2)},$ & $F_{(0)(1)}.$%
\end{tabular}
\end{center}

For $x_1,y_1 \in NG_1$, $x_2,y_2 \in NG_2$ and $x_3,y_3 \in NG_3$
the generator element of the normal subgroup $N_4$ can be easily
written down from Lemma \ref{le25}.

\begin{theorem}
(\cite{amut4})For $n=2,3$ and $4,$ let $\mathbf{G}$ be a simplicial
group with Moore complex $\mathbf{NG}$ in which $G_{n}=D_{n},$ is
the normal subgroup of $G_{n}$ generated by the degenerate elements
in dimension $n,$ then
\begin{equation*}
\begin{tabular}{l}
$\partial _{n}(NG_{n})=\prod\limits_{I,J}\left[ K_{I},K_{J}\right] $%
\end{tabular}%
\end{equation*}%
\ for $I,J\subseteq \lbrack n-1]$ with $I\cup J=[n-1],$
$I=[n-1]-\{\alpha \}$ $J=[n-1]-\{\beta \}$ where $(\alpha ,\beta
)\in P(n).$
\end{theorem}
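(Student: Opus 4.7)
The plan is to prove the two inclusions $\prod_{I,J}[K_I,K_J]\subseteq \partial_n(NG_n)$ and $\partial_n(NG_n)\subseteq \prod_{I,J}[K_I,K_J]$ separately, using the Peiffer pairings $F_{\alpha,\beta}$ as the bridge between commutators in $K_I\cap K_J$-pieces and elements of $NG_n$. The hypothesis $G_n=D_n$ is only needed for the second inclusion: it guarantees that, modulo products of Peiffer pairings, the Moore complex group $NG_n$ is exhausted by the constructions coming from the semidirect decomposition of Lemma~\ref{le25}.

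For the inclusion $\prod_{I,J}[K_I,K_J]\subseteq \partial_n(NG_n)$, I would first observe that if $I=[n-1]\setminus\{\alpha\}$ and $J=[n-1]\setminus\{\beta\}$ with $(\alpha,\beta)\in P(n)$, then any $x\in K_I$ and $y\in K_J$ can be lifted into $G_n$ by degeneracies $s_\alpha$ and $s_\beta$ indexed by the complementary multi-indices provided by $P(n)$. The element $F_{\alpha,\beta}(x,y)$ lies in $NG_n$ by construction (the projection $p=p_{n-1}\cdots p_0$ lands in the intersection of kernels $d_0,\dots,d_{n-1}$). A direct calculation using the simplicial identities $d_i s_j=\mathrm{id}$ for $i=j,j+1$ shows that $d_n F_{\alpha,\beta}(x,y)$ equals the commutator $[x,y]$ up to a product of commutators already in $\prod[K_I,K_J]$. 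Iterating over all pairs $(\alpha,\beta)\in P(n)$ gives the required inclusion.

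For the reverse inclusion, the idea is to analyse an arbitrary $z\in NG_n$. By the semidirect decomposition recalled above, $z$, viewed inside $G_n=D_n$, can be rewritten as a product $\prod_{\alpha\in S(n)} s_\alpha(z_\alpha)$ of degenerate pieces, and its projection back into $NG_n$ is expressible as a product of iterated commutators of such degenerate pieces. Using the definition of the Peiffer pairings, these commutators rearrange (modulo the normal subgroup $N_n$) into a product of $F_{\alpha,\beta}(x_\alpha,y_\beta)$'s. Applying $\partial_n=d_n$ to each factor and repeatedly using the simplicial identities reduces each $d_n F_{\alpha,\beta}(x_\alpha,y_\beta)$ to a commutator $[u,v]$ where $u\in K_{[n-1]\setminus\{\alpha\}}$ and $v\in K_{[n-1]\setminus\{\beta\}}$, since the unused faces $d_i$ ($i\in[n-1]\setminus\{\alpha\}$) vanish on $s_\alpha x_\alpha$ by the simplicial identities, and symmetrically for $s_\beta y_\beta$.

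The main obstacle will be the combinatorial bookkeeping for $n=3$ and especially for $n=4$: there are six generating Peiffer pairings in dimension $3$ and twenty-five in dimension $4$ (as listed above), and for each one the expansion of $d_n F_{\alpha,\beta}(x_\alpha,y_\beta)$ is a long product of commutators whose individual factors must be shown to lie in the correct $[K_I,K_J]$. Rather than performing all these case analyses in sequence, I would isolate a general lemma: for any $(\alpha,\beta)\in P(n)$, $d_n\circ F_{\alpha,\beta}$ factors (modulo lower-order commutators already accounted for) through the commutator map $K_{[n-1]\setminus\{\alpha\}}\times K_{[n-1]\setminus\{\beta\}}\to G_{n-1}$, and then invoke induction on the lexicographic order of $P(n)$ to absorb the correction terms. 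This packaging reduces the explicit $n=4$ verifications to a handful of representative computations.
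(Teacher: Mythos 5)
There is an important preliminary point: this paper does not prove the statement at all — it is quoted from Mutlu--Porter \cite{amut4}, and the Remark immediately after it points to Castiglioni--Ladra \cite{ladra} for a proof of the general inclusions — so there is no in-paper argument to compare yours with; your outline therefore has to stand on its own, and it reproduces the broad strategy of \cite{amut4} (explicit Peiffer-pairing elements for one inclusion, generation of $NG_n\cap D_n$ by $F_{\alpha,\beta}$ elements plus the computation of their boundaries for the other) but leaves the two hard points unproved. Your one sound structural remark is that the hypothesis $G_n=D_n$ is needed only for $\partial_n(NG_n)\subseteq\prod_{I,J}[K_I,K_J]$; without it one only obtains statements about $\partial_n(NG_n\cap D_n)$.

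The first gap is a type mismatch in your argument for $\prod_{I,J}[K_I,K_J]\subseteq\partial_n(NG_n)$: you apply $F_{\alpha,\beta}$ to $x\in K_I$, $y\in K_J$, but $F_{\alpha,\beta}$ is defined on $NG_{n-\#\alpha}\times NG_{n-\#\beta}$, and when $\#\beta\geq 2$ the group $K_J=\bigcap_{i\in J}\ker d_i\subseteq G_{n-1}$ is strictly larger than any degenerate copy of $NG_{n-\#\beta}$ (for $n=3$, $\beta=(2,1)$ one has $K_{\{0\}}=\ker d_0\subseteq G_2\cong NG_2\rtimes s_1NG_1$, versus $NG_1$). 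So ``$d_nF_{\alpha,\beta}(x,y)=[x,y]\cdot(\text{terms already accounted for})$'' does not even parse; to realize an arbitrary commutator $[x,y]$ with $x\in K_I$ one must first decompose $x$ and $y$ via the semidirect decomposition into Moore and degenerate pieces and treat the resulting cases — exactly the bookkeeping you hope to skip. The second and more serious gap is in the reverse inclusion: the assertion that, modulo $N_n$, every element of $NG_n$ ($=NG_n\cap D_n$ under $G_n=D_n$) ``rearranges into a product of $F_{\alpha,\beta}$'s'' \emph{is} the substance of the theorem, and your proposed substitute for the explicit $n=3,4$ verifications — a general lemma plus induction on the lexicographic order of $P(n)$ to absorb correction terms — is precisely what is not available: if such a uniform absorption argument worked it would give the equality for all $n$, which the pairing method of \cite{amut4} does not achieve (hence the restriction to $n\leq 4$ there, and the later, methodologically different, general proof of \cite{ladra} via a new description of the inverse to the normalization functor). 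As written, your plan asserts the two crucial steps rather than proving them.
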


\begin{remark}
Shortly in \cite{amut4} they defined the normal subgroup $\partial
_{n}(NG_{n}\cap D_{n})$ by $F_{\alpha ,\beta }$ elements which were
defined first by Carrasco in \cite{c}. Castiglioni and Ladra
\cite{ladra} gave a general proof for the inclusions partially
proved by Arvasi and Porter in \cite{patron3}, Arvasi and Ak\c{c}a
in \cite{AA} and Mutlu and Porter in \cite{amut4}. Their approach to
the problem is different from that of cited works. They have
succeeded with a proof, for the case of algebras, over an operad by
introducing a different description of the adjoint inverse of the
normalization functor $\mathbf{N}:Ab^{\Delta ^{op}}\rightarrow
Ch_{\geqslant 0}$, and for the case of groups, they then adapted the
construction for the adjoint inverse used for algebras to get a
simplicial group $G\boxtimes \Lambda $ from the Moore complex of a
simplicial group $G$.
\end{remark}

Following the theorem named as \textbf{Theorem B} in \cite{amut4} we
have

\begin{lemma}
\label{le25} Let $\mathbf{G}$ be a simplicial group with Moore complex $%
\mathbf{NG}$\textbf{\ }of\textbf{\ }length $3$. Then for $n=4$ case
the images of $F_{\alpha ,\beta }$ elements under $\partial _{4}$
given in Table 1 are trivial.
\end{lemma}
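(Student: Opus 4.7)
The plan rests on a single observation: each $F_{\alpha,\beta}(x_{\alpha}, y_{\beta})$ lies in $NG_{4}$ by its very construction. Recall that the pairings are defined as $F_{\alpha,\beta} = p\mu(s_{\alpha}\times s_{\beta})$, where $p : G_{4} \to NG_{4}$ is the composite projection $p_{3}p_{2}p_{1}p_{0}$ whose image is the normal subgroup $NG_{4} = \bigcap_{i=0}^{3}\ker d_{i}$. Under the hypothesis that the Moore complex has length $3$, we have $NG_{4} = 1$; therefore every $F_{\alpha,\beta}(x_{\alpha}, y_{\beta})$ is the identity of $G_{4}$, and hence $\partial_{4} F_{\alpha,\beta}(x_{\alpha}, y_{\beta}) = 1$ at once.

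The substance of the lemma, then, is that the \emph{explicit commutator expressions} in $G_{3}$ displayed in Table 1 — obtained by differentiating each $F_{\alpha,\beta}$ term-by-term — indeed evaluate to $1$. To produce those expressions, I would, for each of the 25 pairings $(\alpha,\beta)\in P(4)$, first unpack $F_{\alpha,\beta}(x_{\alpha}, y_{\beta})$ as a product of commutators of the form $[s_{\alpha}x_{\alpha}, s_{\beta}y_{\beta}]$ using $p_{j}(z) = z\cdot s_{j}d_{j}(z)^{-1}$ together with the semidirect decomposition recalled in Subsection 2.4. Next I would apply $d_{4}$ termwise, using $d_{4}[a,b] = [d_{4}a, d_{4}b]$ and the simplicial identities $d_{4}s_{i} = s_{i}d_{3}$ for $i = 0, 1, 2$ and $d_{4}s_{3} = \mathrm{id}$. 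Finally I would simplify each factor using $d_{i}x = 1$ whenever $x\in NG_{k}$ and $0\leq i < k$; this kills many of the potential terms and produces the entry appearing in Table 1.

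The main obstacle will be sheer bookkeeping. The pairings indexed by longer tuples such as $(0)(3,2,1)$, $(3,2,0)(1)$, $(3,1,0)(2)$, $(2,1,0)(3)$ unfold into lengthy products of commutators, each of which must be differentiated with care. Each individual computation is routine, but maintaining consistency across all 25 cases is non-trivial. Conceptually, no new idea is required beyond the simplicial identities and the observation of the first paragraph; the hypothesis $NG_{4} = 1$ enters purely at the end, forcing every produced expression in Table 1 to collapse to the identity of $G_{3}$.
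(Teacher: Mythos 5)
Your proposal is correct and takes essentially the same route as the paper: the paper's own proof is the one-line observation that, the Moore complex having length $3$, $NG_{4}=1$ (citing Theorem B of Mutlu--Porter for the Table 1 expressions), so each $F_{\alpha,\beta}$ and hence its image under $\partial_{4}$ is trivial. Your first paragraph merely makes explicit why this works --- $F_{\alpha,\beta}$ lands in $NG_{4}$ by construction through the projection $p$ --- and your outline of deriving Table 1 termwise is the same routine computation the paper delegates to \cite{amut4}.
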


\begin{proof}
Since $NG_{4}=1$ by the \textbf{Theorem B} in \cite{amut4} result is
trivial.
\end{proof}

\newpage

\begin{tabular}{|l|l|l|l|}
\hline $1)$ & $d_{4}(F_{(0)(3,2,1)}(x_{3},x_{1}))$ & $=$ & $\left[
s_{0}d_{3}x_{3},s_{2}s_{1}x_{1}\right] \left[ s_{2}s_{1}x_{1},s_{1}d_{3}x_{3}%
\right] $ \\ \hline &  &  & $\left[
s_{2}d_{3}x_{3},s_{2}s_{1}x_{1}\right] \left[
s_{2}s_{1}x_{1},x_{3}\right] $ \\ \hline $2)$ &
$d_{4}(F_{(3,2,0)(1)}(x_{1},x_{3}))$ & $=$ & $\left[
s_{2}s_{0}x_{1},s_{1}d_{3}x_{3}\right] \left[ s_{1}d_{3}x_{3},s_{2}s_{1}x_{1}%
\right] $ \\ \hline &  &  & $\left[
s_{2}s_{1}x_{1},s_{2}d_{3}x_{3}\right] \left[
s_{2}d_{3}x_{3},s_{2}s_{0}x_{1}\right] $ \\ \hline
&  &  & $\left[ s_{2}s_{0}x_{1},x_{3}\right] \left[ x_{3},s_{2}s_{1}x_{1}%
\right] $ \\ \hline $3)$ & $d_{4}(F_{(3,1,0)(2)}(x_{1},x_{3}))$ &
$=$ & $\left[
s_{1}s_{0}x_{1},s_{2}d_{3}x_{3}\right] \left[ s_{2}d_{3}x_{3},s_{2}s_{0}x_{1}%
\right] $ \\ \hline
&  &  & $\left[ s_{2}s_{0}x_{1},x_{3}\right] \left[ x_{3},s_{1}s_{0}x_{1}%
\right] $ \\ \hline $4)$ & $d_{4}(F_{(2,1,0)(3)}(x_{1},x_{3}))$ &
$=$ & $\left[ s_{2}s_{1}s_{0}d_{1}x_{1},x_{3}\right] \left[
x_{3},s_{1}s_{0}x_{1}\right] $
\\ \hline
$5)$ & $d_{4}(F_{(3,0)(2,1)}(x_{2},y_{2}))$ & $=$ & $\left[
s_{0}x_{2},s_{2}s_{1}d_{2}y_{2}\right] \left[ s_{2}s_{1}d_{2}y_{2},s_{1}x_{2}%
\right] $ \\ \hline &  &  & $\left[
s_{2}x_{2},s_{2}s_{1}d_{2}y_{2}\right] \left[
s_{1}y_{2},s_{2}x_{2}\right] $ \\ \hline
&  &  & $\left[ s_{1}x_{2},s_{1}y_{2}\right] \left[ s_{1}y_{2},s_{0}x_{2}%
\right] $ \\ \hline $6)$ & $d_{4}(F_{(2,0)(3,1)}(x_{2},y_{2}))$ &
$=$ & $\left[
s_{2}s_{0}d_{2}x_{2},s_{1}y_{2}\right] \left[ s_{1}y_{2},s_{2}s_{1}d_{2}x_{2}%
\right] $ \\ \hline &  &  & $\left[
s_{2}s_{1}d_{2}x_{2},s_{2}y_{2}\right] \left[
s_{2}y_{2},s_{2}s_{0}d_{2}x_{2}\right] $ \\ \hline
&  &  & $\left[ s_{0}x_{2},s_{2}y_{2}\right] \left[ s_{2}y_{2},s_{1}x_{2}%
\right] $ \\ \hline
&  &  & $\left[ s_{1}x_{2},s_{1}y_{2}\right] \left[ s_{1}y_{2},s_{0}x_{2}%
\right] $ \\ \hline $7)$ & $d_{4}(F_{(1,0)(3,2)}(x_{2},y_{2}))$ &
$=$ & $\left[
s_{1}s_{0}d_{2}x_{2},s_{2}y_{2}\right] \left[ s_{2}y_{2},s_{2}s_{0}d_{2}x_{2}%
\right] \left[ s_{0}x_{2},s_{2}y_{2}\right] $ \\ \hline $8)$ &
$d_{4}(F_{(1)(3,2)}(x_{3},x_{2}))$ & $=$ & $\left[
s_{1}d_{3}x_{3},s_{2}x_{2}\right] \left[ s_{2}x_{2},s_{2}d_{3}x_{3}\right] %
\left[ x_{3},s_{2}x_{2}\right] $ \\ \hline $9)$ &
$d_{4}(F_{(0)(3,2)}(x_{3},x_{2}))$ & $=$ & $\left[
s_{0}d_{3}x_{3},s_{2}x_{2}\right] $ \\ \hline $10)$ &
$d_{4}(F_{(3,1)(2)}(x_{3},x_{2}))$ & $=$ & $\left[
s_{0}d_{3}x_{3},s_{1}x_{2}\right] \left[
s_{1}x_{2},s_{1}d_{3}x_{3}\right] $
\\ \hline
&  &  & $\left[ s_{2}d_{3}x_{3},s_{2}x_{2}\right] \left[ s_{2}x_{2},x_{3}%
\right] $ \\ \hline $11)$ & $d_{4}(F_{(0)(2,1)}(x_{3},x_{2}))$ & $=$
& $\left[ s_{0}d_{3}x_{3},s_{2}s_{1}d_{2}x_{2}\right] \left[
s_{2}s_{1}d_{2}x_{2},s_{1}d_{3}x_{3}\right] $ \\ \hline &  &  &
$\left[ s_{2}d_{3}x_{3},s_{2}s_{1}d_{2}x_{2}\right] \left[
s_{1}x_{2},x_{3}\right] $ \\ \hline $12)$ &
$d_{4}(F_{(3,1)(2)}(x_{2},x_{3}))$ & $=$ & $\left[
s_{1}x_{2},s_{2}d_{3}x_{3}\right] \left[
s_{2}d_{3}x_{3},s_{2}x_{2}\right] $
\\ \hline
&  &  & $\left[ s_{2}l,x_{3}\right] \left[ x_{3},s_{1}x_{2}\right] $ \\
\hline $13)$ & $d_{4}(F_{(2,1)(3)}(x_{2},x_{3}))$ & $=$ & $\left[
s_{2}s_{1}d_{2}x_{2},x_{3}\right] \left[ x_{3},s_{1}x_{2}\right] $
\\ \hline $14)$ & $d_{4}(F_{(3,0)(2)}(x_{2},x_{3}))$ & $=$ & $\left[
s_{0}x_{2},s_{2}d_{3}x_{3}\right] \left[ x_{3},s_{0}x_{2}\right] $
\\ \hline $15)$ & $d_{4}(F_{(3,0)(1)}(x_{2},x_{3}))$ & $=$ & $\left[
s_{0}x_{2},s_{1}d_{3}x_{3}\right] \left[
s_{1}d_{3}x_{3},s_{1}x_{2}\right] $
\\ \hline
&  &  & $\left[ s_{2}x_{2},s_{2}d_{3}x_{3}\right] \left[ x_{3},s_{2}x_{2}%
\right] $ \\ \hline $16)$ & $d_{4}(F_{(2,0)(3)}(x_{2},x_{3}))$ & $=$
& $\left[ s_{2}s_{0}d_{2}x_{2},x_{3}\right] \left[
x_{3},s_{0}x_{2}\right] $ \\ \hline $17)$ &
$d_{4}(F_{(2,0)(1)}(x_{2},x_{3}))$ & $=$ & $\left[
s_{2}s_{0}d_{2}x_{2},s_{1}d_{3}x_{3}\right] \left[
s_{1}d_{3}x_{3},s_{2}s_{1}d_{2}x_{2}\right] $ \\ \hline &  &  &
$\left[ s_{2}s_{1}d_{2}x_{2},s_{2}d_{3}x_{3}\right] \left[
s_{2}d_{3}x_{3},s_{2}s_{0}d_{2}x_{2}\right] $ \\ \hline
&  &  & $\left[ s_{0}x_{2},x_{3}\right] \left[ x_{3},s_{1}x_{2}\right] $ \\
\hline $18)$ & $d_{4}(F_{(1,0)(3)}(x_{2},x_{3}))$ & $=$ & $\left[
s_{1}s_{0}d_{2}x_{2},x_{3}\right] $ \\ \hline $19)$ &
$d_{4}(F_{(1,0)(2)}(x_{2},x_{3}))$ & $=$ & $\left[
s_{1}s_{0}d_{2}x_{2},s_{2}d_{3}x_{3}\right] \left[
s_{2}d_{3}x_{3},s_{2}s_{0}d_{2}x_{2}\right] $ \\ \hline &  &  &
$\left[ s_{0}x_{2},x_{3}\right] $ \\ \hline $20)$ &
$d_{4}(F_{(2)(3)}(x_{3},y_{3}))$ & $=$ & $\left[
s_{2}d_{3}x_{3},y_{3}\right] \left[ y_{3},x_{3}\right] $ \\ \hline
$21)$ & $d_{4}(F_{(1)(3)}(x_{3},y_{3}))$ & $=$ & $\left[
s_{1}d_{3}x_{3},y_{3}\right] $ \\ \hline $22)$ &
$d_{4}(F_{(0)(3)}(x_{3},y_{3}))$ & $=$ & $\left[
s_{0}d_{3}x_{3},y_{3}\right] $ \\ \hline $23)$ &
$d_{4}(F_{(1)(2)}(x_{3},y_{3}))$ & $=$ & $\left[
s_{1}d_{3}x_{3},s_{2}d_{3}y_{3}\right] \left[ s_{2}d_{3}y_{3},s_{2}d_{3}x_{3}%
\right] \left[ x_{3},y_{3}\right] $ \\ \hline $24)$ &
$d_{4}(F_{(0)(2)}(x_{3},y_{3}))$ & $=$ & $\left[
s_{0}d_{3}x_{3},s_{2}d_{3}y_{3}\right] $ \\ \hline $25)$ &
$d_{4}(F_{(0)(1)}(x_{3},y_{3}))$ & $=$ & $\left[
s_{0}d_{3}x_{3},s_{1}d_{3}y_{3}\right] \left[ s_{1}d_{3}y_{3},s_{1}d_{3}x_{3}%
\right] $ \\ \hline
&  &  & $\left[ s_{2}d_{3}x_{3},s_{2}d_{3}y_{3}\right] \left[ y_{3},x_{3}%
\right] $ \\ \hline
\end{tabular}

\begin{center}
Table 1
\end{center}

where $x_{3},y_{3}\in NG_{3},x_{2},y_{2}\in NG_{2},x_{1}\in NG_{1}$.

\newpage

\section{$2$-Crossed Modules}

The notion of crossed module is an efficient algebraic tool to
handle connected spaces with only the first homotopy groups non
trivial, up to homotopy.

A $crossed$ $module$ is a group homomorphism $\partial :M\rightarrow
P$
together with an action of $P$ on $M$, written $^{p}m$ for $p\in P$ and $%
m\in M$, satisfying the conditions.

\textbf{CM1)} $\partial $ is $P$-equivariant, i.e, for all $p\in P$
, $m\in
M $%
\begin{equation*}
\partial (^{p}m)=p\partial (m)p^{-1}
\end{equation*}

\textbf{CM2) }(Peiffer Identity) for all $m,m^{\prime }\in M$%
\begin{equation*}
^{\partial m}m^{\prime }=mm^{\prime }m^{-1}
\end{equation*}%
We will denote such a crossed module by $(M,P,\partial )$.

A \textit{morphism of crossed module} from $(M,P,\partial )$ to
$(M^{\prime
},P^{\prime },\partial ^{\prime })$ is a pair of group homomorphisms%
\begin{equation*}
\phi :M\longrightarrow M^{\prime }\text{ , \ }\psi :P\longrightarrow
P^{\prime }\text{ }
\end{equation*}%
such that $\phi (^{p}m)=^{\psi (p)}\phi (m)$ and $\partial ^{\prime
}\phi (m)=\psi \partial (m)$.

We thus get a category $\mathbf{XMod}$ of crossed modules.

\textit{Examples of Crossed Modules}

1) Any normal subgroup $N\trianglelefteq P$ gives rise to a crossed
module namely the inclusion map, $i:N\hookrightarrow P$. Conversely,
given any crossed module $\partial :M\longrightarrow P$ ,
$Im\partial $ is a normal subgroup of $P$.

2) Given any $P$-module $M$, the trivial map%
\begin{equation*}
1:M\longrightarrow P
\end{equation*}%
that maps everything to $1$ in $P$, is a crossed module. Conversely,
if $\partial :M\rightarrow P$ is a crossed module, $\ker \partial $
is central in $M$ and inherits a natural $P$-module structure from the $P$%
-action on $M$.

The following definition of $2$-crossed module is equivalent to that
given by Conduch{\'{e}}, \cite{conduche}.

A 2-crossed module of groups consists of a complex of groups%
\begin{equation*}
L\overset{\partial _{2}}{\longrightarrow }M\overset{\partial _{1}}{%
\longrightarrow }N
\end{equation*}%
together with (a) actions of $N$ on $M$ and $L$ so that $\partial
_{2},\partial _{1}$ are morphisms of $N$-groups, and (b) an
$N$-equivariant function
\begin{equation*}
\{\quad ,\quad \}:M\times M\longrightarrow L
\end{equation*}%
called a \textit{Peiffer lifting}. This data must satisfy the
following axioms:

\newpage

\begin{equation*}
\begin{array}{lrrll}
\mathbf{2CM1)} &  & \partial _{2}\{m,m^{\prime }\} & = & \left(
^{\partial _{1}m}m^{\prime }\right) mm^{\prime
}{}^{-1}m^{-1}\newline
\\
\mathbf{2CM2)} &  & \{\partial _{2}l,\partial _{2}l^{\prime }\} & =
& [l^{\prime },l]\newline
\\
\mathbf{2CM3)} &  & (i)\quad \{mm^{\prime },m^{\prime \prime }\} & =
& ^{\partial _{1}m}\{m^{\prime },m^{\prime \prime }\}\{m,m^{\prime
}m^{\prime \prime }m^{\prime }{}^{-1}\}\newline
\\
&  & (ii)\quad \{m,m^{\prime }m^{\prime \prime }\} & = &
\{m,m^{\prime }\}^{mm^{\prime }m^{-1}}\{m,m^{\prime \prime
}\}\newline
\\
\mathbf{2CM4)} &  & \{m,\partial _{2}l\}\{\partial _{2}l,m\} & = &
^{\partial _{1}m}ll^{-1}\newline
\\
\mathbf{2CM5)} &  & ^{n}\{m,m^{\prime }\} & = & \{^{n}m,^{n}m^{\prime }\}%
\newline
\end{array}%
\end{equation*}%
\newline
for all $l,l^{\prime }\in L$, $m,m^{\prime },m^{\prime \prime }\in M$ and $%
n\in N$.

Here we have used $^{m}l$ as a shorthand for $\{\partial _{2}l,m\}l$
in
condition $\mathbf{2CM3)}(ii)$ where $l$ is $\{m,m^{\prime \prime }\}$ and $%
m $ is $mm^{\prime }(m)^{-1}$. This gives a new action of $M$ on
$L$. Using this notation, we can split $\mathbf{2CM4)}$ into two
pieces, the first of which is tautologous:
\begin{equation*}
\begin{array}{lrrll}
\mathbf{2CM4)} & \quad (a)\quad \{\partial _{2}l,m\} & = &
^{m}l(l)^{-1}, &
\\
& \quad (b)\quad \{m,\partial _{2}l\} & = & (^{\partial
_{1}m}l)(^{m}l^{-1}). &
\end{array}%
\end{equation*}%
The old action of $M$ on $L$, via $\partial _{1}$ and the $N$-action
on $L$, is in general distinct from this second action with
$\{m,\partial _{2}l\}$
measuring the difference (by $\mathbf{2CM4)}(b)$). An easy argument using $%
\mathbf{2CM2)}$ and $\mathbf{2CM4)}(b)$ shows that with this action, $^{m}l$%
, of $M$ on $L$, $(L,M,\partial _{2})$ becomes a crossed module. A
morphism of $2$-crossed modules can be defined in an obvious way. We
thus define the category of $2$-crossed modules denoting it by
$\mathbf{X}_{2}\mathbf{Mod}$.

A crossed square as defined by D. Guin-Wal\'{e}ry and J.-L. Loday in
\cite{wl} (see
also \cite{jl}, \cite{bl1}), can be seen as a mapping cone in \cite%
{cond}. Furthermore 2-crossed modules are related to simplicial
groups. This relation can be found in \cite{conduche}, \cite{amut4}.

\begin{theorem}
The category $\mathbf{X_{2}Mod}$ of $2$-crossed modules is
equivalent to the category of $\mathbf{SimpGrp}_{\leq2}$ simplicial
groups with Moore complex of length $2$.
\end{theorem}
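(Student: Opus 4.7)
The plan is to exhibit a pair of mutually quasi-inverse functors $\mathcal{M}:\mathbf{SimpGrp}_{\leq 2}\to \mathbf{X}_{2}\mathbf{Mod}$ and $\mathcal{K}:\mathbf{X}_{2}\mathbf{Mod}\to \mathbf{SimpGrp}_{\leq 2}$. For the functor $\mathcal{M}$, given a simplicial group $\mathbf{G}$ with $NG_{n}=1$ for $n\geq 3$, I would take the truncated Moore complex
\[
NG_{2}\xrightarrow{\partial_{2}} NG_{1}\xrightarrow{\partial_{1}} NG_{0},
\]
with the $NG_{0}$-actions on $NG_{1}$ and $NG_{2}$ induced by conjugation by $s_{0}$ and $s_{1}s_{0}$ in $\mathbf{G}$, and define the Peiffer lifting
\[
\{x,y\}:=[s_{1}x,s_{1}y\,s_{0}y^{-1}]\in NG_{2}\qquad (x,y\in NG_{1})
\]
(a representative of the pairing $F_{(1)(0)}$ sitting in $\ker d_{0}\cap\ker d_{1}$). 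Verification of axioms $\mathbf{2CM1)}$--$\mathbf{2CM5)}$ then reduces to applications of the simplicial identities together with commutator identities; the vanishing of $NG_{3}$ is what forces the Peiffer lifting to satisfy $\mathbf{2CM2)}$ and $\mathbf{2CM3)}$, via the $n=3$ Peiffer pairings $F_{\alpha,\beta}$ already recorded in Section 2.5.

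For the functor $\mathcal{K}$, given a $2$-crossed module $(L\xrightarrow{\partial_{2}}M\xrightarrow{\partial_{1}}N,\{\,,\,\})$, I would build $G_{\bullet}$ by the semidirect product decomposition suggested by Proposition\ \ref{theorem}:
\[
G_{0}=N,\qquad G_{1}=M\rtimes N,\qquad G_{2}=(L\rtimes M)\rtimes(M\rtimes N),
\]
where the action of $M$ on $L$ used in $G_{2}$ is the twisted action $^{m}l=\{\partial_{2}l,m\}l$ discussed just after the axioms, and where the non-obvious cross-terms between the two $M$ factors are controlled by the Peiffer lifting $\{\,,\,\}$. Faces and degeneracies are prescribed on generators by $d_{0}(m,n)=n$, $d_{1}(m,n)=\partial_{1}(m)\,n$, $s_{0}(n)=(1,n)$ in dimension $1$, and analogously in dimension $2$ with $d_{2}$ built from $\partial_{2}$; higher dimensions are forced by iterated degeneracies, which is consistent precisely because the Moore complex will have length $2$. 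Checking the simplicial identities in dimensions $\leq 2$ is exactly where each axiom $\mathbf{2CM1)}$--$\mathbf{2CM5)}$ gets used, in a manner dual to the calculation for $\mathcal{M}$.

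Once the two functors are in place, naturality is automatic from the definitions, and the equivalence follows by exhibiting natural isomorphisms $\mathcal{M}\mathcal{K}\Rightarrow \mathrm{id}_{\mathbf{X}_{2}\mathbf{Mod}}$ and $\mathcal{K}\mathcal{M}\Rightarrow \mathrm{id}_{\mathbf{SimpGrp}_{\leq 2}}$. The first is essentially tautological: from the semidirect product description of $\mathcal{K}(L,M,N)$ one reads off $NG_{0}=N$, $NG_{1}=M$, $NG_{2}=L$ with the prescribed boundaries and Peiffer lifting. The second uses Proposition\ \ref{theorem} to rewrite an arbitrary $\mathbf{G}\in\mathbf{SimpGrp}_{\leq 2}$ as the semidirect product of copies of $NG_{0},NG_{1},NG_{2}$ (and their images under degeneracies), and then matches the structure maps with those produced by $\mathcal{K}$ from $\mathcal{M}(\mathbf{G})$.

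The main obstacle is the sheer amount of bookkeeping: on the $\mathcal{M}$ side one must show that the explicit formula for $\{x,y\}$ lies in $NG_{2}$ and satisfies $\mathbf{2CM1)}$--$\mathbf{2CM5)}$, using the $n=2,3$ Peiffer pairings from Section 2.5 together with the fact that $NG_{3}=1$ kills a controlled list of commutators. On the $\mathcal{K}$ side the difficulty is that the cross-multiplication in $G_{2}$ requires simultaneous use of the two actions of $M$ on $L$, and the proof that the putative $d_{i}$ and $s_{i}$ really satisfy the simplicial identities is where axiom $\mathbf{2CM3)}$ (the bilinearity/cocycle condition on $\{\,,\,\}$) is essential. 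I would organise these verifications by extracting from Lemma \ref{le25} (specialised to $n=3$) the precise commutator relations that a length-$2$ Moore complex imposes, and then reading each axiom of a $2$-crossed module as one such relation.
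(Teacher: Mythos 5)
Your overall strategy is the standard one and is exactly the template the paper itself follows one dimension higher (the Proposition constructing a $3$-crossed module from the Moore complex via the $F_{\alpha,\beta}$ pairings, and the reconstruction $H_{1}=M\rtimes N$, $H_{2}=(L\rtimes M)\rtimes(M\rtimes N)$ followed by the skeleton functor); for the $2$-dimensional statement itself the paper gives no proof, referring to Conduch\'{e} and Mutlu--Porter. However, two concrete points in your sketch do not work as written. First, your Peiffer lifting $\{x,y\}=[s_{1}x,\ s_{1}y\,s_{0}y^{-1}]$ does lie in $NG_{2}$, but it is not the pairing $F_{(0)(1)}$ and it fails \textbf{2CM1)} with the paper's conventions: since $d_{2}s_{1}=\mathrm{id}$ and $d_{2}s_{0}=s_{0}d_{1}$, one computes $\partial_{2}\{x,y\}=[x,\ y\,s_{0}\partial_{1}(y)^{-1}]=x\,y\;{}^{\partial_{1}(y)^{-1}}(x^{-1})\;y^{-1}$, which is not $({}^{\partial_{1}x}y)\,x\,y^{-1}x^{-1}$. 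The correct choice, for which the axioms are calibrated and which the paper itself uses in its Proposition on the $3$-crossed module of a simplicial group, is $\{x,y\}=[s_{0}x,s_{1}y][s_{1}y,s_{1}x]$, i.e. the genuine $F_{(0)(1)}$ element.

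Second, on the reconstruction side, the claim that higher dimensions are ``forced by iterated degeneracies, which is consistent precisely because the Moore complex will have length $2$'' hides the one step that is not automatic: after building the $2$-truncation (with the derived action ${}^{m}l=\{\partial_{2}l,m\}l$, which is the right one) and applying $\mathbf{st}_{2}$, you must still prove that the Moore complex of the resulting simplicial group vanishes in dimension $3$, i.e. that $\partial_{3}(NH_{3}\cap D_{3})=1$ for the degenerately generated part. This is precisely where \textbf{2CM1)}--\textbf{2CM5)} are consumed, and it is the analogue of the computation the paper performs in its dimension-$3$ proof by invoking Theorem B of Mutlu--Porter and checking that the listed commutators are trivial for the explicit faces and actions; without it you only have a $2$-truncated simplicial group, not an object of $\mathbf{SimpGrp}_{\leq 2}$, and the unit/counit comparisons you call tautological have nothing to compare with. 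Both defects are repairable along the lines of Conduch\'{e} and Mutlu--Porter, but as written the argument contains a formula that fails an axiom and a missing verification, not merely bookkeeping.
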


\section{$3$-Crossed Modules}

In the following we will define the category of $3$-crossed modules.
First of all we adapt ideas from Conduch{\'{e}}'s method given in
\cite{conduche}. He gave some equalities by using the semi-direct
decomposition of a
simplicial group. But these are exactly the images of Peiffer pairings $%
F_{\alpha ,\beta }$ under $\partial _{3}$ for $n=3$ case defined in \cite%
{amut4}. The difference of our method is to use $F_{\alpha ,\beta }$
instead
of semi-direct decomposition. Thus we will define similar equalities for $%
n=4 $ and get the axioms of 3-crossed module.

Let $\mathbf{G}$ be a simplicial group with Moore complex of length $3$ and $%
NG_{0}=N,$ $NG_{1}=M,$ $NG_{2}=L,$ $NG_{3}=K$. Thus we have a group
complex
\begin{equation*}
K\overset{\partial _{3}}{\longrightarrow }L\overset{\partial _{2}}{%
\longrightarrow }M\overset{\partial _{1}}{\longrightarrow }N
\end{equation*}%
Let the actions of $N$ on $K$, $L$, $M$, $M$ on $L$, $K$ and $L$ on
$K$ be
as follows;%
\begin{equation}
\begin{array}{ll}
^{n}m & =s_{0}n\left( m\right) s_{0}n^{-1} \\
^{n}l & =s_{1}s_{0}n\left( l\right) s_{1}s_{0}n^{-1} \\
^{n}k & =s_{2}s_{1}s_{0}n\left( k\right) s_{2}s_{1}s_{0}n^{-1} \\
^{m}l & =s_{1}m\left( l\right) s_{1}m^{-1} \\
^{m}k & =s_{2}s_{1}m\left( k\right) s_{2}s_{1}m^{-1} \\
l\cdot k & =s_{2}l\left( k\right) s_{2}l^{-1}.%
\end{array}%
\end{equation}

\bigskip

Using table 1, since%
\begin{equation*}
\begin{array}{rr}
\left[ s_{1}s_{0}ms_{2}s_{1}\partial _{1}m,k\right] & =1 \\
\left[ s_{1}ls_{2}s_{1}\partial _{2}l,k\right] & =1 \\
\left[ k^{\prime },k^{-1}s_{2}\partial _{3}k\right] & =1%
\end{array}%
\end{equation*}%
we get

\begin{equation*}
\begin{array}[t]{ll}
^{\partial _{1}m}k & =s_{1}s_{0}m\left( k\right) s_{1}s_{0}m^{-1} \\
^{\partial _{2}l}k & =s_{1}l\left( k\right) s_{1}l^{-1} \\
\partial _{3}k\cdot k^{\prime } & =k\left( k^{\prime }\right) k^{-1}%
\end{array}%
\end{equation*}%
and using the simplicial identities we get,
\begin{equation*}
\partial _{3}(l\cdot k)=\partial _{3}(s_{2}l\left( k\right)
s_{2}l^{-1})=\partial _{3}s_{2}l\left( \partial _{3}k\right)
s_{2}l^{-1}=l\left( \partial _{3}k\right) l^{-1}.
\end{equation*}%
Thus $\partial _{3}:K\rightarrow L$ is a crossed module.

The Peiffer liftings given in the definition below are the
$F_{\alpha ,\beta }$ pairings for the case $n=3$ defined in
\cite{c}.

\begin{definition}
Let $K\overset{\partial _{3}}{\longrightarrow }L\overset{\partial _{2}}{%
\longrightarrow }M\overset{\partial _{1}}{\longrightarrow }N$ be a
group complex defined above. We define the Peiffer liftings as
follows;
\begin{equation*}
\begin{array}{llll}
\left\{ ~,~\right\} : & M\times M & \mathbf{\longrightarrow } & L \\
& \left\{ m,m^{\prime }\right\} & = & \left[ s_{1}m,s_{1}m^{\prime }\right] %
\left[ s_{1}m^{\prime },s_{0}m\right] \\
\left\{ ~,~\right\} _{(1)(0)}: & L\times L & \longrightarrow & K \\
& \left\{ l,l^{\prime }\right\} _{_{(1)(0)}} & = & \left[
s_{2}l^{\prime },s_{2}l\right] \left[ s_{1}l,s_{1}l^{\prime }\right]
\left[ s_{1}l^{\prime
},s_{0}l\right] \\
\left\{ ~,~\right\} _{(2)(1)}: & L\times L & \longrightarrow & K \\
& \left\{ l,l^{\prime }\right\} _{(2)(1)} & = & \left[
s_{2}l,s_{2}l^{\prime
}\right] \left[ s_{2}l^{\prime },s_{1}l\right] \\
\left\{ ~,~\right\} _{(0)(2)}: & L\times L & \longrightarrow & K \\
& \left\{ l,l^{\prime }\right\} _{_{(0)(2)}} & = & \left[
s_{2}l^{\prime
},s_{0}l\right] \\
\left\{ ~,~\right\} _{(1,0)(2)} & M\times L & \longrightarrow & K \\
& \left\{ m,l^{\prime }\right\} _{_{(1,0)(2)}} & = & \left[
s_{2}s_{0}m,s_{2}l^{\prime }\right] \left[ s_{2}l^{\prime },s_{1}s_{0}m%
\right] \\
\left\{ ~,~\right\} _{(2,0)(1)} & M\times L & \longrightarrow & K \\
& \left\{ m,l^{\prime }\right\} _{(2,0)(1)} & = & \left[
s_{2}s_{0}m,s_{2}l^{\prime }\right] \left[ s_{2}l^{\prime },s_{2}s_{1}m%
\right] \left[ s_{2}s_{1}m,s_{1}l^{\prime }\right] \left[
s_{1}l^{\prime
},s_{2}s_{0}m\right] \\
\left\{ ~,~\right\} _{(0)(2,1)} & L\times M & \longrightarrow & K \\
& \left\{ l^{\prime },m\right\} _{(0)(2,1)} & = & \left[
s_{2}s_{1}m,s_{2}l^{\prime }\right] \left[ s_{1}l^{\prime },s_{2}s_{1}m%
\right] \left[ s_{2}s_{1}m,s_{0}l^{\prime }\right]%
\end{array}%
\end{equation*}%
where \ $m,$ $m^{\prime }\in M,$ $l,$ $l^{\prime }\in L$.
\end{definition}

Then using Table 1 we get the following identities.

\begin{center}
\begin{tabular}{|l|l|l|}
\hline $\left\{ m,\partial _{3}y_{3}\right\} _{(1,0)(2)}$ & $=$ &
$\left\{ m,\partial _{3}y_{3}\right\} _{(2,0)(1)}\text{
}^{m}(y_{3})^{\partial _{1}m}(y_{3}^{-1})$ \\ \hline
$\left\{ \partial _{3}y_{3},m\right\} _{(0)(2,1)}$ & $=$ & $%
^{m}(y_{3})y_{3}^{-1}$ \\ \hline $\left\{ m,\partial _{3}k\right\}
_{(1,0)(2)}$ & $=$ & $\left\{ m,\partial
_{3}k\right\} _{(2,0)(1)}\text{ }\left\{ \partial _{3}k,m\right\} _{(0)(2,1)}%
\text{ }k$ $^{\partial _{1}m}(k^{-1})$ \\ \hline $\left\{ l^{\prime
},\partial _{2}l\right\} _{(0)(2,1)}$ & $=$ & $\left\{ l,l^{\prime
}\right\} _{(2)(1)}^{-1}\left\{ l^{\prime },l\right\} _{(1)(0)}$
\\ \hline
$\left\{ \partial _{2}l,l^{\prime }\right\} _{(2,0)(1)}$ & $=$ &
$\left\{ l,l^{\prime }\right\} _{(0)(2)\ }^{-1}$ $^{\left[ l^{\prime
},\ l\right] }(\left\{ l,l^{\prime }\right\} _{(2)(1)})\left\{
l,l^{\prime }\right\} _{(1)(0)}$ \\ \hline $\left\{ \partial
_{2}l,l^{\prime }\right\} _{(1,0)(2)}$ & $=$ & $(\left\{ l,l^{\prime
}\right\} _{(0)(2)})^{-1}$ \\ \hline
$\left\{ l,l^{\prime }l^{\prime \prime }\right\} _{(2)(1)}$ & $=$ & $%
\{l,l^{\prime }\}_{(2)(1)}{}^{\partial l}l^{\prime }.\{l,l^{\prime
\prime }\}_{(2)(1)}$ \\ \hline
$\left\{ ll^{\prime },l^{\prime \prime }\right\} _{(2)(1)}$ & $=$ & $%
l.\{l^{\prime },l^{\prime \prime }\}_{(2)(1)}\{l,^{\partial
l^{\prime }}l^{\prime \prime }\}_{(2)(1)}$ \\ \hline $\partial
_{3}(\left\{ l,l^{\prime }\right\} _{(1)(0)})$ & $=$ & $\left[
l,l^{\prime }\right] \left\{ \partial _{2}l,\partial _{2}l^{\prime
}\right\} $ \\ \hline $\partial _{3}(\left\{ l,l^{\prime }\right\}
_{(2)(1)})$ & $=$ & $ll^{\prime }l^{-1}(^{\partial _{2}l}l^{\prime
})^{-1}$ \\ \hline $\partial _{3}(\left\{ l,l^{\prime }\right\}
_{(0)(2)})$ & $=$ & $\partial _{3}(\left\{ \partial _{2}l,l^{\prime
}\right\} _{(1,0)(2)})^{-1}$ \\ \hline $\partial _{3}\left\{
l,m\right\} _{(0)(2,1)}$ & $=$ & $^{m}ll^{-1}\left\{
\partial _{2}l,m\right\} $ \\ \hline
$\partial _{3}\left\{ m,l\right\} _{(2,0)(1)}$ & $=$ & $\partial
_{3}\left\{ m,l\right\} _{(1,0)(2)}\text{ }^{\partial _{1}m}l\text{
}^{m}(l^{-1})\left\{ m,\partial _{2}l\right\} $ \\ \hline $\left\{
\partial _{3}k,l\right\} _{(2)(1)}\left\{ l,\partial _{3}k\right\}
_{(2)(1)}$ & $=$ & $k\left( ^{\partial _{2}l}(k^{-1})\right) $ \\
\hline $\left\{ \partial _{3}k,l\right\} _{(1)(0)}\left\{ l,\partial
_{3}k\right\} _{(1)(0)}$ & $=$ & $1$ \\ \hline
$\left\{ \partial _{3}k,\partial _{3}k^{\prime }\right\} _{(2)(1)}$ & $=$ & $%
\left[ k,k^{\prime }\right] $ \\ \hline
$\left\{ \partial _{3}k,\partial _{3}k^{\prime }\right\} _{(1)(0)}$ & $=$ & $%
\left[ k^{\prime },k\right] $ \\ \hline $\left\{ \partial
_{3}k,l^{\prime }\right\} _{(0)(2)}$ & $=$ & $1$ \\ \hline $\left\{
\partial _{2}l,\partial _{3}k\right\} _{(1,0)(2)}$ & $=$ & $\left\{
l,\partial _{3}k\right\} _{(0)(2)}^{-1}$ \\ \hline $\left\{ \partial
_{2}l,\partial _{3}k\right\} _{(2,0)(1)}$ & $=$ & $\left\{
l,\partial _{3}k\right\} _{(0)(2)}k\left( ^{\partial
_{2}l}(k^{-1})\right) $
\\ \hline
$\left\{ \partial _{3}k,\partial _{2}l\right\} _{(0)(2,1)}$ & $=$ & $%
^{\partial _{2}l}k\text{ }k^{-1}$ \\ \hline
\end{tabular}

Table 2

\begin{tabular}{|l|}
\hline $^{n}\left\{ m,m^{\prime }\right\} =\left\{
^{n}m,^{n}m^{\prime }\right\} $
\\ \hline
$^{n}\left\{ l,l^{\prime }\right\} _{_{(1)(0)}}=\left\{
^{n}l,^{n}l^{\prime }\right\} _{_{(1)(0)}}$ \\ \hline $^{n}\left\{
l,l^{\prime }\right\} _{(2)(1)}=\left\{ ^{n}l,^{n}l^{\prime
}\right\} _{(2)(1)}$ \\ \hline $^{n}\left\{ l,l^{\prime }\right\}
_{_{(0)(2)}}=\left\{ ^{n}l,^{n}l^{\prime }\right\} _{_{(0)(2)}}$ \\
\hline $^{n}\left\{ m,l^{\prime }\right\} _{_{(1,0)(2)}}=\left\{
^{n}m,^{n}l^{\prime }\right\} _{_{(1,0)(2)}}$ \\ \hline $^{n}\left\{
m,l^{\prime }\right\} _{(2,0)(1)}=\left\{ ^{n}m,^{n}l^{\prime
}\right\} _{(2,0)(1)}$ \\ \hline $^{n}\left\{ l^{\prime },m\right\}
_{(0)(2,1)}=\left\{ ^{n}l^{\prime },^{n}m\right\} _{(0)(2,1)}$ \\
\hline
\end{tabular}

Table 3

\begin{tabular}{|l|}
\hline $^{m}\left\{ m^{\prime },m^{\prime \prime }\right\} =$
$^{m}\left\{ m^{\prime },^{m}m^{\prime \prime }\right\} $ \\ \hline
$^{m}\left\{ l,l^{\prime }\right\} _{_{(1)(0)}}=\left\{
^{m}l,^{m}l^{\prime }\right\} _{_{(1)(0)}}$ \\ \hline $^{m}\left\{
l,l^{\prime }\right\} _{(2)(1)}=\left\{ ^{m}l,^{m}l^{\prime
}\right\} _{(2)(1)}$ \\ \hline $^{m}\left\{ l,l^{\prime }\right\}
_{_{(0)(2)}}=\left\{ ^{m}l,^{m}l^{\prime }\right\} _{_{(0)(2)}}$ \\
\hline $^{m}\left\{ m,l^{\prime }\right\} _{_{(1,0)(2)}}=\left\{
^{m}m,^{m}l^{\prime }\right\} _{_{(1,0)(2)}}$ \\ \hline $^{m}\left\{
m,l^{\prime }\right\} _{(2,0)(1)}=\left\{ ^{m}m,^{m}l^{\prime
}\right\} _{(2,0)(1)}$ \\ \hline $^{m}\left\{ l^{\prime },m\right\}
_{(0)(2,1)}=\left\{ ^{m}l^{\prime },^{m}m\right\} _{(0)(2,1)}$ \\
\hline
\end{tabular}

Table 4
\end{center}

\noindent where $m,$ $m^{\prime },m^{\prime \prime }\in M,$ $l,$
$l^{\prime }\in
L,k,k^{\prime }\in K$. From these results all liftings are $N,$ $M$%
-equivariant.

\begin{definition}
A \textit{3-crossed module} consists of a complex of groups
\begin{equation*}
K\overset{\partial _{3}}{\longrightarrow }L\overset{\partial _{2}}{%
\longrightarrow }M\overset{\partial _{1}}{\longrightarrow }N
\end{equation*}%
together with an action of $N$ on $K,L,M$ and an action of $M$ on
$K,L$ and an action of $L$ on $K$ so that $\partial _{3}$, $\partial
_{2}$,$\partial _{1}$ are morphisms of $N,M$-groups and the
$M,N$-equivariant liftings
\begin{equation*}
\begin{tabular}{lll}
$\{$ $,$ $\}_{(1)(0)}:L\times L\longrightarrow K,$ & $\{$ $,$ $%
\}_{(0)(2)}:L\times L\longrightarrow K,$ & $\{$ $,$
$\}_{(2)(1)}:L\times
L\longrightarrow K,$ \\
&  &  \\
$\{$ $,$ $\}_{(1,0)(2)}:M\times L\longrightarrow K,$ & $\{$ $,$ $%
\}_{(2,0)(1)}:M\times L\longrightarrow K,$ &  \\
&  &  \\
$\{$ $,$ $\}_{(0)(2,1)}:L\times M\longrightarrow K,$ & $\{$ $,$
$\}:M\times M\longrightarrow L$ &
\end{tabular}%
\end{equation*}%
called \textit{$3$-dimensional Peiffer liftings}. This data must
satisfy the following axioms:

\begin{equation*}
\begin{array}{lrrl}
\mathbf{3CM1)} &  & K\overset{\partial _{3}}{\longrightarrow }L\overset{%
\partial _{2}}{\longrightarrow }M & \text{is a }2\text{-crossed module with
the Peiffer lifting }\{\text{ },\text{ }\}_{(2,1)} \\
\mathbf{3CM2)} &  & \left\{ m,\partial _{3}k\right\} _{(1,0)(2)} &
=\left\{ m,\partial _{3}k\right\} _{(2,0)(1)}\text{
}^{m}(k)^{\partial _{1}m}(k^{-1})
\\
\mathbf{3CM3)} &  & \left\{ \partial _{3}k,m\right\} _{(0)(2,1)} & =\text{ }%
^{m}(k)k^{-1} \\
\mathbf{3CM4)} &  & \left\{ m,\partial _{3}k\right\} _{(1,0)(2)} &
=\left\{ m,\partial _{3}k\right\} _{(2,0)(1)}\text{ }\left\{
\partial _{3}k,m\right\} _{(0)(2,1)}\text{ }k^{\partial
_{1}m}(k^{-1})\newline
\\
\mathbf{3CM5)} &  & \left\{ l^{\prime },\partial _{2}l\right\}
_{(0)(2,1)} & =\left\{ l,l^{\prime }\right\} _{(2)(1)}^{-1}\left\{
l^{\prime },l\right\} _{(1)(0)}\newline
\\
\mathbf{3CM6)} &  & \left\{ \partial _{2}l,l^{\prime }\right\}
_{(2,0)(1)} & =\left\{ l,l^{\prime }\right\} _{(0)(2)\ }^{-1}\text{
}^{\left[ l^{\prime },\ l\right] }(\left\{ l,l^{\prime }\right\}
_{(2)(1)})\left\{ l,l^{\prime
}\right\} _{(1)(0)} \\
\mathbf{3CM7)} &  & \left\{ \partial _{2}l,l^{\prime }\right\}
_{(1,0)(2)} &
=(\left\{ l,l^{\prime }\right\} _{(0)(2)})^{-1} \\
\mathbf{3CM8)} &  & \partial _{3}(\left\{ l,l^{\prime }\right\}
_{(1)(0)}) & =\left[ l,l^{\prime }\right] \left\{ \partial
_{2}l,\partial _{2}l^{\prime
}\right\}  \\
\mathbf{3CM9)} &  & \partial _{3}(\left\{ l,l^{\prime }\right\}
_{(0)(2)}) & =\partial _{3}(\left\{ \partial _{2}l,l^{\prime
}\right\} _{(1,0)(2)})^{-1}
\\
\mathbf{3CM10)} &  & \partial _{3}\left\{ l,m\right\} _{(0)(2,1)} & =\text{ }%
^{m}ll^{-1}\left\{ \partial _{2}l,m\right\}  \\
\mathbf{3CM11)} &  & \partial _{3}\left\{ m,l\right\} _{(2,0)(1)} &
=\partial _{3}\left\{ m,l\right\} _{(1,0)(2)}\text{ }^{\partial
_{1}m}l\text{
}^{m}(l^{-1})\left\{ m,\partial _{2}l\right\}  \\
\mathbf{3CM12a)} &  & \left\{ \partial _{3}k,l\right\} _{(1)(0)} &
=(^{l}k)k^{-1} \\
\mathbf{3CM12b)} &  & \left\{ l,\partial _{3}k\right\} _{(1)(0)} &
k(^{l}k)^{-1} \\
\mathbf{3CM13)} &  & \left\{ \partial _{3}k,\partial _{3}k^{\prime
}\right\}
_{(1)(0)} & =\left[ k^{\prime },k\right]  \\
\mathbf{3CM14)} &  & \left\{ \partial _{3}k,l^{\prime }\right\}
_{(0)(2)} &
=1 \\
\mathbf{3CM15)} &  & \left\{ \partial _{2}l,\partial _{3}k\right\}
_{(1,0)(2)} & =\left\{ l,\partial _{3}k\right\} _{(0)(2)}^{-1} \\
\mathbf{3CM16)} &  & \left\{ \partial _{2}l,\partial _{3}k\right\}
_{(2,0)(1)} & =\left\{ l,\partial _{3}k\right\} _{(0)(2)}k\left(
^{\partial
_{2}l}(k^{-1})\right)  \\
\mathbf{3CM17)} &  & \left\{ \partial _{3}k,\partial _{2}l\right\}
_{(0)(2,1)} & =\text{ }^{\partial _{2}l}k\text{ }k^{-1} \\
\mathbf{3CM18)} &  & \partial _{2}\left\{ m,m^{\prime }\right\}  &
=mm^{\prime }m^{-1}(^{\partial _{1}m}m^{\prime })^{-1}%
\end{array}%
\end{equation*}
\end{definition}

\newpage

We denote such a 3-crossed module by $(K,L,M,N,\partial
_{3},\partial _{2},\partial _{1}).$

A \textit{morphism of }$3$\textit{-crossed modules} of groups may be
pictured by the diagram

\begin{center}
$ \xymatrix@R=40pt@C=40pt{
  L_{3}
            \ar[d]_-{f_{3}}
             \ar@{->}@<0pt>[r]^-{\partial_{3}}
 &
 L_{2}
                     \ar[d]_-{f_{2}}
                     \ar@{->}@<0pt>[r]^-{\partial_{2}}
 &
 L_{1}
    \ar[d]_-{f_{1}}
    \ar@{->}@<0pt>[r]^-{\partial_{1}}
 &
  L_{0}
        \ar[d]_-{f_{0}}
  \\
   L'_{3}
             \ar@{->}@<0pt>[r]_-{\partial'_{3}}
 &
  L'_{2}
    \ar@{->}@<0pt>[r]_-{\partial'_{2}}
 &
  L'_{1}
   \ar@{->}@<0pt>[r]_-{\partial'_{1}}
 &
 L'_{0}
    }
 $

\end{center}

\noindent where
\begin{equation*}
f_{1}(^{n}m)=\text{ }^{(f_{0}(n))}f_{1}(m),\text{ }f_{2}(^{n}l)=\text{ }%
^{(f_{0}(n))}f_{2}(l),\text{ }f_{3}(^{n}k)=\text{
}^{(f_{0}(n))}f_{3}(k)
\end{equation*}%
for $\left\{ \text{ },\text{ }\right\} _{(0)(2)},\left\{ \text{ },\text{ }%
\right\} _{(2)(1)},$ $\left\{ \text{ },\text{ }\right\} _{(1)(0)}$
\begin{equation*}
\left\{ \text{ },\text{ }\right\} f_{2}\times f_{2}=f_{3}\left\{ \text{ },%
\text{ }\right\}
\end{equation*}%
for $\left\{ \text{ },\text{ }\right\} _{(1,0)(2)},\left\{ \text{ },\text{ }%
\right\} _{(2,0)(1)}$
\begin{equation*}
\left\{ \text{ },\text{ }\right\} f_{1}\times f_{2}=f_{3}\left\{ \text{ },%
\text{ }\right\}
\end{equation*}%
for $\left\{ \text{ },\text{ }\right\} _{(0)(2,1)}$
\begin{equation*}
\left\{ \text{ },\text{ }\right\} f_{2}\times f_{1}=f_{3}\left\{ \text{ },%
\text{ }\right\}
\end{equation*}%
and for $\left\{ \text{ },\text{ }\right\} $
\begin{equation*}
\left\{ \text{ },\text{ }\right\} f_{1}\times f_{1}=f_{2}\left\{ \text{ },%
\text{ }\right\}
\end{equation*}%
for all $k\in K,l\in L,m\in M,n\in N$. These compose in an obvious
way. We
thus can define the category of $3$-crossed modules, denoting it by $\mathbf{%
X}_{3}\mathbf{Mod}$.

\section{Applications}

\subsection{Simplicial Groups}

As an application we consider in details the relation between
simplicial groups and $3$-crossed modules.

\begin{proposition}
Let $\mathbf{G}$ be a simplicial group with Moore complex
$\mathbf{NG}$. Then the group complex
\begin{equation*}
NG_{3}/\partial _{4}(NG_{4}\cap D_{4})\overset{\overline{\partial }_{3}}{%
\longrightarrow }NG_{2}\overset{\partial _{2}}{\longrightarrow }NG_{1}%
\overset{\partial _{1}}{\longrightarrow }NG_{0}
\end{equation*}%
is a $3$-crossed module with the Peiffer liftings defined below:
\begin{equation*}
\begin{array}{rr}
\{\text{ },\text{ }\}:NG_{1}\times NG_{1} & \longrightarrow \\
\left\{ x_{1},y_{1}\right\} & \longmapsto \\
&  \\
\{\text{ },\text{ }\}_{(1)(0)}:NG_{2}\times NG_{2} & \longrightarrow \\
\left\{ x_{2},y_{2}\right\} & \longmapsto \\
&  \\
\{\text{ },\text{ }\}_{(2)(1)}:NG_{2}\times NG_{2} & \longrightarrow \\
\left\{ x_{2},y_{2}\right\} & \longmapsto \\
&  \\
\{\text{ },\text{ }\}_{(0)(2)}:NG_{2}\times NG_{2} & \longrightarrow \\
\left\{ x_{2},y_{2}\right\} & \longmapsto \\
&  \\
\{\text{ },\text{ }\}_{(1,0)(2)}:NG_{1}\times NG_{2} & \longrightarrow \\
\left\{ x_{1},y_{2}\right\} & \longmapsto \\
&  \\
\{\text{ },\text{ }\}_{(2,0)(1)}:NG_{1}\times NG_{2} & \longrightarrow \\
\left\{ x_{1},y_{2}\right\} & \longmapsto \\
&  \\
\{\text{ },\text{ }\}_{(0)(2,1)}:NG_{2}\times NG_{1} & \longrightarrow \\
\left\{ y_{2},x_{1}\right\} & \longmapsto%
\end{array}%
\begin{array}{l}
NG_{2} \\
\left[ s_{0}x_{1},s_{1}y_{1}\right] \left[ s_{1}y_{1},s_{1}x_{1}\right] \\
\\
NG_{3}/\partial _{4}(NG_{4}\cap D_{4}) \\
\overline{\left( \left[ s_{0}x_{2},s_{1}y_{2}\right] \left[
s_{1}y_{2},s_{1}x_{2}\right] \left[ s_{2}x_{2},s_{2}y_{2}\right]
\right) }
\\
\\
NG_{3}/\partial _{4}(NG_{4}\cap D_{4}) \\
\overline{\left( \left[ s_{1}x_{2},s_{2}y_{2}\right] \left[
s_{2}y_{2},s_{2}x_{2}\right] \right) } \\
\\
NG_{3}/\partial _{4}(NG_{4}\cap D_{4}) \\
\overline{\left( \left[ s_{0}x_{2},s_{2}y_{2}\right] \right) } \\
\\
NG_{3}/\partial _{4}(NG_{4}\cap D_{4}) \\
\overline{\left( \left[ s_{1}s_{0}x_{1},s_{2}y_{2}\right] \left[
s_{2}y_{2},s_{2}s_{0}x_{1}\right] \right) } \\
NG_{3}/\partial _{4}(NG_{4}\cap D_{4}) \\
\overline{\left( \left[ s_{2}s_{0}x_{1},s_{1}y_{2}\right] \left[
s_{1}y_{2},s_{2}s_{1}x_{1}\right] \left[ s_{2}s_{1}x_{1},s_{2}y_{2}\right] %
\left[ s_{2}y_{2},s_{2}s_{0}x_{1}\right] \right) } \\
\\
NG_{3}/\partial _{4}(NG_{4}\cap D_{4}) \\
\overline{\left( \left[ s_{0}y_{2},s_{2}s_{1}x_{1}\right] \left[
s_{2}s_{1}x_{1},s_{1}y_{2}\right] \right) \left[ s_{2}y_{2},s_{2}s_{1}x_{1}%
\right] }%
\end{array}%
\end{equation*}
(The elements denoted by $\overline{[\text{ },\text{ }]}$ are cosets in $%
NG_{3}/\partial _{4}(NG_{4}\cap D_{4})$ and given by the elements in $%
NG_{3}. $)
\end{proposition}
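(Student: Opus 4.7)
The plan is to verify well-definedness of each Peiffer lifting on the quotient, check that the actions in (1) and the boundary maps descend compatibly, and then show that every axiom 3CM1--3CM18 follows either from Table 2 (itself obtained from Table 1 by modding out $\partial_{4}(NG_{4}\cap D_{4})$) or from a short simplicial calculation. The axioms were designed precisely so that this program goes through.

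First I would check that each lifting lands in $NG_{3}=\bigcap_{i=0}^{2}\ker d_{i}$. Every lifting is a product of commutators of iterated degeneracies, so applying $d_{0},d_{1},d_{2}$ and using the simplicial identities together with $d_{i}x_{k}=1$ for the relevant $x_{k}\in NG_{k}$ annihilates every factor. The bracket $\{m,m'\}=[s_{1}m,s_{1}m'][s_{1}m',s_{0}m]$ is checked analogously to lie in $NG_{2}$. Next, the actions in (1) are just conjugation by iterated degeneracies, so they are visibly group actions, and the $\partial_{i}$ are equivariant because face--degeneracy relations convert conjugation in $G_{k}$ into conjugation in $G_{k-1}$. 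Passing to $K=NG_{3}/\partial_{4}(NG_{4}\cap D_{4})$ preserves everything, since $\partial_{4}(NG_{4}\cap D_{4})$ is stable under all the relevant conjugations (being the image under a group homomorphism of a normal subgroup of $G_{4}$), and because $\partial_{3}\partial_{4}=0$ on the Moore complex guarantees that $\overline{\partial}_{3}:K\to L$ is well defined.

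For the axioms, the key observation is that each $F_{\alpha,\beta}(x,y)$ appearing in Section 2 belongs to $NG_{4}\cap D_{4}$: by construction it is a product of commutators of degenerate elements, and the Mutlu--Porter \textbf{Theorem B} (invoked via Lemma \ref{le25}) ensures $d_{0},d_{1},d_{2},d_{3}$ of each such product vanish. Consequently $d_{4}F_{\alpha,\beta}(x,y)\in\partial_{4}(NG_{4}\cap D_{4})$, so every row of Table 1 yields the triviality in $K$ of a specific commutator word. Rearranging each such word gives exactly the corresponding entry of Table 2, which is by inspection axiom 3CM2--3CM17. Axiom 3CM1 (that $K\to L\to M$ is a 2-crossed module with lifting $\{\,,\,\}_{(2)(1)}$) decomposes into the three Table 2 entries computing $\partial_{3}\{l,l'\}_{(2)(1)}$, $\{l,l'l''\}_{(2)(1)}$, and $\{ll',l''\}_{(2)(1)}$, together with the equivariance rows of Tables 3 and 4. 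Finally, axiom 3CM18 is a direct computation: applying $\partial_{2}=d_{2}|_{NG_{2}}$ to $[s_{1}m,s_{1}m'][s_{1}m',s_{0}m]$ and using $d_{2}s_{1}=\mathrm{id}$, $d_{2}s_{0}=s_{0}d_{1}$ yields $mm'm^{-1}({}^{\partial_{1}m}m')^{-1}$.

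The main obstacle is not conceptual but bookkeeping: one must keep precise track of which face or degeneracy applies in each term of each commutator word and confirm that the Table 1 identities reassemble into the exact shape required by Table 2 on the nose, up to elements of $\partial_{4}(NG_{4}\cap D_{4})$. Once Lemma \ref{le25} is accepted, no genuinely new simplicial calculation is required beyond 3CM18 and the bilinearity clauses inside 3CM1, both of which are short commutator manipulations in $G_{2}$ and $G_{3}$.
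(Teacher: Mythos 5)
Your plan coincides with the paper's own proof in Appendix A: there, each axiom 3CM2--3CM17 is obtained by observing that $F_{\alpha,\beta}(x,y)\in NG_{4}\cap D_{4}$, so the explicit Table~1 formulas for $d_{4}F_{\alpha,\beta}$ become congruences modulo $\partial_{4}(NG_{4}\cap D_{4})$ that rearrange into the Table~2 identities, while 3CM18 and the multiplicativity clauses inside 3CM1 are short direct commutator computations, exactly as you propose. The only minor slip is attributional (membership of $F_{\alpha,\beta}(x,y)$ in $NG_{4}$ follows from the projection $p$ in the construction of the pairings rather than from Theorem~B, which instead identifies $\partial_{4}(NG_{4}\cap D_{4})$ with the subgroup these images generate), and it does not affect the argument.
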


\begin{proof}
Appendix A
\end{proof}

\begin{theorem}
The category of $3$-crossed modules is equivalent to the category of
simplicial groups with Moore complex of length $3$.
\end{theorem}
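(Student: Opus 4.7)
The plan is to build functors in both directions and show they are mutually quasi-inverse. One direction is already essentially in hand: if $\mathbf{G}\in\mathbf{SimpGrp}_{\leq 3}$ then $NG_{4}=1$, so $\partial_{4}(NG_{4}\cap D_{4})=1$, and the preceding Proposition equips the Moore complex $NG_{3}\to NG_{2}\to NG_{1}\to NG_{0}$ with the structure of a $3$-crossed module whose Peiffer liftings are the ones displayed there. This assignment is manifestly natural with respect to simplicial morphisms, yielding a functor $\mathcal{N}\colon\mathbf{SimpGrp}_{\leq 3}\to \mathbf{X}_{3}\mathbf{Mod}$.

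The harder direction is to construct a quasi-inverse $\mathcal{K}\colon\mathbf{X}_{3}\mathbf{Mod}\to\mathbf{SimpGrp}_{\leq 3}$. Starting from a $3$-crossed module $K\xrightarrow{\partial_{3}}L\xrightarrow{\partial_{2}}M\xrightarrow{\partial_{1}}N$, I would define $G_{n}$ level by level via the semidirect decomposition of Proposition~1, identifying the Moore slots $NG_{0},NG_{1},NG_{2},NG_{3}$ with $N,M,L,K$ and setting $NG_{k}=1$ for $k\geq 4$. Multiplication on each semidirect factor is dictated by the given actions of $N$ on $K,L,M$, of $M$ on $K,L$ and of $L$ on $K$; the Peiffer liftings supply the commutators between elements living in distinct degeneracy strata $s_{\alpha}(-)$ and $s_{\beta}(-)$, reversing the definition $F_{\alpha,\beta}=p\mu(s_{\alpha}\times s_{\beta})$. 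Face operators $d_{i}$ are then assembled from $\partial_{1},\partial_{2},\partial_{3}$ together with the canonical projections off each semidirect factor, while the degeneracies $s_{i}$ are the obvious inclusions $s_{\alpha}\mapsto s_{i}s_{\alpha}$.

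The bulk of the work, and the main obstacle, is verifying that these constructed faces and degeneracies satisfy the simplicial identities and that $NG_{n}=1$ for $n\geq 4$. The axioms of a $3$-crossed module are engineered precisely to make this work. Axiom $\mathbf{3CM1}$ reduces dimensions $\leq 2$ to the already established $2$-crossed module equivalence; axioms $\mathbf{3CM8}$--$\mathbf{3CM11}$ together with $\mathbf{3CM18}$ pin down the values of $d_{3}$ on the Peiffer lifting elements in the way required by the formulas of the Proposition; and axioms $\mathbf{3CM2}$--$\mathbf{3CM7}$ together with $\mathbf{3CM12}$--$\mathbf{3CM17}$ are a direct transcription of the statement that every entry in Table~1 is trivial, which forces $NG_{4}=1$ in $\mathcal{K}(K,L,M,N)$. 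The verification is therefore essentially the derivation of Tables~1--4 and of the Proposition read in reverse, with the axioms substituting for the simplicial identities used there.

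Finally, I would check that $\mathcal{N}$ and $\mathcal{K}$ are mutually quasi-inverse. The isomorphism $\mathcal{N}\mathcal{K}\cong\mathrm{id}_{\mathbf{X}_{3}\mathbf{Mod}}$ is immediate from the construction, because $\mathcal{K}$ places $K,L,M,N$ in the Moore complex positions with the prescribed boundaries and actions, and the formulas of the Proposition then return the original Peiffer liftings (using $F_{\alpha,\beta}=p\mu(s_{\alpha}\times s_{\beta})$ once more). The isomorphism $\mathcal{K}\mathcal{N}\cong\mathrm{id}_{\mathbf{SimpGrp}_{\leq 3}}$ is supplied on the nose by the semidirect decomposition of Proposition~1, with compatibility of faces and degeneracies following from the fact that the structure maps of $\mathcal{K}\mathcal{N}(\mathbf{G})$ were defined from that very decomposition; naturality in both directions is then routine.
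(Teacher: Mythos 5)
Your overall strategy is the paper's: the functor from $\mathbf{SimpGrp}_{\leq 3}$ to $\mathbf{X}_{3}\mathbf{Mod}$ is exactly the one supplied by the Proposition (using $\partial _{4}(NG_{4}\cap D_{4})=1$), and the reverse direction is likewise a reconstruction by iterated semidirect products from the given actions; the paper makes this explicit as $H_{0}=N$, $H_{1}=M\rtimes N$, $H_{2}=(L\rtimes M)\rtimes (M\rtimes N)$, $H_{3}=(K\rtimes L)\rtimes (L\rtimes M)\rtimes (M\rtimes N)$, with the action of $L$ on $K$ given by $^{l}k=\left\{ \partial _{3}k,l\right\} _{(2)(1)}k^{-1}$ coming from axiom $\mathbf{3CM1}$, and with explicit face and degeneracy maps. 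Up to that point your plan and the paper's proof agree.

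The genuine gap is in dimensions $\geq 4$. You cannot simply ``set $NG_{k}=1$ for $k\geq 4$'': once the lower dimensions are fixed, the groups in dimensions $\geq 4$ consist of degenerate elements and are determined by the simplicial structure, so the vanishing of the Moore complex there must be proved, not decreed; and a by-hand definition of $G_{4},G_{5},\dots$ with all the commutation relations and simplicial identities would amount to re-deriving the skeleton construction. The paper avoids this by constructing only the $3$-truncation and applying the $3$-skeleton functor $\mathbf{st}_{3}$; then $NH_{p}^{\prime }=NH_{p}\cap D_{p}$ is automatically trivial for $p>4$, and the one remaining claim, $NH_{4}^{\prime }=1$, is obtained by combining Mutlu--Porter's Theorem B (which says $\partial _{4}(NH_{4}\cap D_{4})$ is generated by the images of the $F_{\alpha ,\beta }$ pairings, and these are trivial by the $3$-crossed module axioms, i.e.\ by Table 1) with the fact that $\partial _{4}$ is a monomorphism on $NH_{4}$. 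Your statement that axioms $\mathbf{3CM2}$--$\mathbf{3CM17}$ ``force $NG_{4}=1$'' records only the necessity half of this: triviality of the $d_{4}$-images of the $F_{\alpha ,\beta }$ does not by itself kill $NG_{4}$; you need the skeleton mechanism, the appeal to Theorem B, and the injectivity of $\partial _{4}$ to conclude. (Your attention to the quasi-inverse isomorphisms is fine; the paper itself leaves those checks implicit.)
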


\begin{proof}
Let $\mathbf{G}$ be a simplicial group with Moore complex of length
$3$. In the above proposition we showed that the group complex
\begin{equation*}
NG_{3}\overset{\partial _{3}}{\longrightarrow }NG_{2}\overset{\partial _{2}}{%
\longrightarrow }NG_{1}\overset{\partial _{1}}{\longrightarrow
}NG_{0}
\end{equation*}%
is a $3$-crossed module. Since the Moore coomplex is of length $3$, $%
NG_{4}\cap D_{4}=1,$ so $\partial _{4}(NG_{4}\cap D_{4})=1$. Thus we
can take $NG_{3}$ instead of $NG_{3}/\partial _{4}(NG_{4}\cap
D_{4})$). Finally there is a functor\
\begin{equation*}
\mathbf{\Im }_{3}\mathbf{:SimpGrp}_{\leq 3}\mathbf{\longrightarrow X}_{3}%
\mathbf{Mod}
\end{equation*}%
from the category of simplicial groups with Moore complex of length
$3$ to the category of $3$-crossed modules. Conversely, let
\begin{equation*}
K\overset{\partial _{3}}{\longrightarrow }L\overset{\partial _{2}}{%
\longrightarrow }M\overset{\partial _{1}}{\longrightarrow }N
\end{equation*}%
be a $3$-crossed module. Let $H_{0}=N$. By the action of $N$ on $M$
obtain the $H_{1}=M\rtimes N$ semidirect product. For $(m,n)\in
M\rtimes N,$ define the degeneracy and face maps as,
\begin{equation*}
\begin{array}{llll}
d_{0}: & M\rtimes N & \longrightarrow & N \\
& (m,n) & \longmapsto & n \\
d_{1}: & M\rtimes N & \longrightarrow & N \\
& (m,n) & \longmapsto & (\partial _{1}(m))n \\
s_{0}: & N & \longrightarrow & M\rtimes N \\
& n & \longmapsto & (1,n).%
\end{array}%
\end{equation*}%
Now by the actions of $M$ and $N$ on $L$ we obtain the
$H_{2}=(L\rtimes M)\rtimes (M\rtimes N)$ semidirect product. For
$l\in L,m,m^{\prime }\in M,n\in N,$ define the degeneracy and face
maps as,
\begin{equation*}
\begin{array}{llll}
d_{0}: & (L\rtimes M)\rtimes (M\rtimes N) & \longrightarrow &
(M\rtimes N)
\\
& (l,m,m^{\prime },n) & \longmapsto & (m^{\prime },n) \\
d_{1}: & (L\rtimes M)\rtimes (M\rtimes N) & \longrightarrow &
(M\rtimes N)
\\
& (l,m,m^{\prime },n) & \longmapsto & (mm^{\prime },n) \\
d_{2}: & (L\rtimes M)\rtimes (M\rtimes N) & \longrightarrow &
(M\rtimes N)
\\
& (l,m,m^{\prime },n) & \longmapsto & (\partial _{2}(l)m,\partial
_{1}(m^{\prime })n) \\
s_{0}: & (M\rtimes N) & \longrightarrow & (L\rtimes M)\rtimes
(M\rtimes N)
\\
& (m^{\prime },n) & \longmapsto & (1,1,m^{\prime },n) \\
s_{1}: & (M\rtimes N) & \longrightarrow & (L\rtimes M)\rtimes
(M\rtimes N)
\\
& (m^{\prime },n) & \longmapsto & (1,m^{\prime },1,n).%
\end{array}%
\end{equation*}%
Since $\left\{ ,\right\} _{(2)(1)}$ is a 2-crossed module\textbf{\ }there%
\textbf{\ }is an action of $L$ on $K$ defined as
\begin{equation*}
^{l}k=\left\{ \partial _{3}k,l\right\} _{(2)(1)}k^{-1}
\end{equation*}%
for $l\in L$, $k\in K$. Using this action we obtain a semidirect product $%
K\rtimes L$. The action of $(l,m)\in L\rtimes M$ on $(k,l)\in $
$K\rtimes L$ can be expressed as,
\begin{eqnarray*}
^{(1,m)}(k,l^{\prime }) &=&(^{m}(^{1}k),^{m}(^{1}l^{\prime })) \\
&=&(^{m}(k),^{m}(l^{\prime }))
\end{eqnarray*}%
\begin{eqnarray*}
^{(l,1)}(k,l^{\prime }) &=&(^{1}(^{l}k),^{1}(^{l}l^{\prime })) \\
&=&(^{l}k,^{l}l^{\prime }) \\
&=&(\text{ }^{\partial _{2}l}k\left\{ l,\partial _{3}k\right\}
_{(2)(1)},ll^{\prime }l^{-1}).
\end{eqnarray*}%
After these definitions we have the semidirect product
\begin{equation*}
H_{3}=(K\rtimes L)\rtimes (L\rtimes M)\rtimes (M\rtimes N)
\end{equation*}%
Define the degeneracy and face maps as:
\begin{equation*}
\begin{array}{llll}
d_{0}: & (K\rtimes L)\rtimes (L\rtimes M)\rtimes (M\rtimes N) &
\longrightarrow & (L\rtimes M)\rtimes (M\rtimes N) \\
& (k,l,l^{\prime },m,m^{\prime },n) & \longmapsto & (l^{\prime
},m,m^{\prime
},n) \\
d_{1}: & (K\rtimes L)\rtimes (L\rtimes M)\rtimes (M\rtimes N) &
\longrightarrow & (L\rtimes M)\rtimes (M\rtimes N) \\
& (k,l,l^{\prime },m,m^{\prime },n) & \longmapsto & (l,m,m^{\prime },n) \\
d_{2}: & (K\rtimes L)\rtimes (L\rtimes M)\rtimes (M\rtimes N) &
\longrightarrow & (L\rtimes M)\rtimes (M\rtimes N) \\
& (k,l,l^{\prime },m,m^{\prime },n) & \longmapsto & (ll^{\prime
},m,m^{\prime },n) \\
d_{3}: & (K\rtimes L)\rtimes (L\rtimes M)\rtimes (M\rtimes N) &
\longrightarrow & (L\rtimes M)\rtimes (M\rtimes N) \\
& (k,l,l^{\prime },m,m^{\prime },n) & \longmapsto & (\partial
_{3}kl,\partial _{2}l^{\prime }m,m^{\prime },n) \\
s_{0}: & (L\rtimes M)\rtimes (M\rtimes N) & \longrightarrow &
(K\rtimes
L)\rtimes (L\rtimes M)\rtimes (M\rtimes N) \\
& (l,m,m^{\prime },n) & \longmapsto & (1,l,1,m,m^{\prime },n) \\
s_{1}: & (L\rtimes M)\rtimes (M\rtimes N) & \longrightarrow &
(K\rtimes
L)\rtimes (L\rtimes M)\rtimes (M\rtimes N) \\
& (l,m,m^{\prime },n) & \longmapsto & (1,1,l,m,m^{\prime },n) \\
s_{2}: & (L\rtimes M)\rtimes (M\rtimes N) & \longrightarrow &
(K\rtimes
L)\rtimes (L\rtimes M)\rtimes (M\rtimes N) \\
& (l,m,m^{\prime },n) & \longmapsto & (1,l,1,m,m^{\prime },n)%
\end{array}%
\end{equation*}%
Thus we have a 3-truncated simplicial group $\mathbf{H}%
=\{H_{0},H_{1},H_{2},H_{3}\}.$ Applying the 3-skeleton functor
defined in section 2.1 to 3-truncation gives us a simplicial group
which will again be denoted \textbf{H} and the result has Moore
complex
\begin{equation*}
\ker \partial _{3}\rightarrow K\rightarrow L\rightarrow M\rightarrow
N.
\end{equation*}%
We set $\mathbf{H}^{\prime }=\mathbf{st}_{3}\mathbf{H}$ and note $%
NH_{p}^{\prime }=D_{p}\cap NH_{p},$ where $D_{p}$ is the subgroup of
$H_{p}$ generated by the degenerate elements, and so $NH_{p}^{\prime
}=1$ if $p>4.$ We
claim $NH_{4}^{^{\prime }}=1$. By Theorem B: case $n=4$ (see \cite{amut4}), $%
\partial _{4}(NH_{4}\cap D_{4})$ is the product of commutators. A direct
calculation using the descriptions of the actions and the face maps
above shows that these are all trivial, so $\partial _{4}(NH_{4}\cap
D_{4})=1,$ but $\partial _{4}^{\mathbf{H}}$ is a monomorphism so
$NH_{4}^{\prime }$ is trivial as required.
\end{proof}

\begin{proposition}
Let $\mathbf{G}$ be a simplicial group, let $\pi _{n}^{\prime }$ be
the homotopy groups of its $3$-crossed module and let $\pi _{n}$ be
the homotopy groups of the classifying space of $\mathbf{G}$, then
we have $\pi _{n}\cong \pi _{n}^{\prime }$ for $n=0,1,2,3,4$.
\end{proposition}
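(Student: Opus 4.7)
The plan is to compute both families of groups explicitly from the Moore complex $\mathbf{NG}$ and match them term by term, then to appeal to the Mutlu-Porter structural theorem (Theorem 2.6) to handle the top degree.

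First I would recall that for any simplicial group $\mathbf{G}$ the homotopy groups of the classifying space satisfy
$$\pi_n(B\mathbf{G}) \cong \pi_{n-1}(\mathbf{G}) \cong H_{n-1}(\mathbf{NG}) = \ker\partial_{n-1}/\partial_n(NG_n),$$
so in particular $\pi_0(B\mathbf{G}) = \ast$, $\pi_1(B\mathbf{G}) = NG_0/\partial_1 NG_1$, $\pi_2(B\mathbf{G}) = \ker\partial_1/\partial_2 NG_2$, $\pi_3(B\mathbf{G}) = \ker\partial_2/\partial_3 NG_3$, and $\pi_4(B\mathbf{G}) = \ker\partial_3/\partial_4 NG_4$. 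The homotopy groups of a $3$-crossed module $K \xrightarrow{\partial_3} L \xrightarrow{\partial_2} M \xrightarrow{\partial_1} N$ are the standard homology groups of the underlying complex of groups, namely $\pi_0' = \ast$, $\pi_1' = N/\partial_1 M$, $\pi_2' = \ker\partial_1/\partial_2 L$, $\pi_3' = \ker\partial_2/\partial_3 K$, and $\pi_4' = \ker\partial_3$.

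Second, I would substitute into these formulas the explicit $3$-crossed module of Proposition 4.2,
$$NG_3/\partial_4(NG_4 \cap D_4) \xrightarrow{\overline{\partial_3}} NG_2 \xrightarrow{\partial_2} NG_1 \xrightarrow{\partial_1} NG_0.$$
The cases $n = 0, 1, 2$ fall out of the definitions. For $n = 3$, observe that $\partial_3 \circ \partial_4 = 0$ in the Moore complex, so $\partial_4(NG_4 \cap D_4) \subseteq \ker\partial_3$; hence the image of $\overline{\partial_3}$ is precisely $\partial_3(NG_3)$, giving $\pi_3' = \ker\partial_2/\partial_3(NG_3) = \pi_3(B\mathbf{G})$.

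The delicate case is $n = 4$, where I must identify
$$\pi_4' = \ker\partial_3/\partial_4(NG_4 \cap D_4) \quad \text{with} \quad \pi_4(B\mathbf{G}) = \ker\partial_3/\partial_4(NG_4).$$
This amounts to proving the equality $\partial_4(NG_4 \cap D_4) = \partial_4(NG_4)$ of normal subgroups of $NG_3$. The argument uses Theorem 2.6 together with Table 1: by the Mutlu-Porter description (generalised by Castiglioni-Ladra \cite{ladra}) any element of $\partial_4(NG_4)$ can be written as a product of commutators of the kernels $K_I \subseteq G_3$, and Table 1 shows explicitly that each such commutator equals $\partial_4$ of a product of the Peiffer pairings $F_{\alpha,\beta}$, all of which are built from degeneracies and therefore lie in $NG_4 \cap D_4$. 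This structural identity is the main obstacle of the proof; with it in place the term-by-term comparison is complete and yields $\pi_n \cong \pi_n'$ for $n = 0, 1, 2, 3, 4$.
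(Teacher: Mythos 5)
Your term-by-term comparison and, in particular, your reduction of the whole statement to the single identity $\partial_4(NG_4)=\partial_4(NG_4\cap D_4)$ is exactly the right diagnosis: the cases $n\le 3$ are immediate once one notes $\mathrm{Im}\,\overline{\partial}_3=\partial_3(NG_3)$, and the only content is in degree $4$, a point the paper itself disposes of with the phrase ``direct calculation''. However, the justification you offer for that identity does not work. The Mutlu--Porter result quoted as Theorem 2.6 describes $\partial_4(NG_4)$ as $\prod_{I,J}[K_I,K_J]$ only under the hypothesis $G_4=D_4$; for an arbitrary simplicial group what one has is the corresponding description of $\partial_4(NG_4\cap D_4)$ (this is how it is used in Lemma \ref{le25} and Appendix A, where all identities are computed modulo $\partial_4(NG_4\cap D_4)$). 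Without $NG_4\subseteq D_4$ the inclusion $\partial_4(NG_4)\subseteq\partial_4(NG_4\cap D_4)$ can genuinely fail: for a simplicial abelian group the Dold--Kan decomposition gives $NG_4\cap D_4=1$, hence $\partial_4(NG_4\cap D_4)=1$, while $\partial_4(NG_4)$ need not be trivial. Concretely, for the simplicial abelian group associated to the chain complex $\mathbb{Z}\xrightarrow{\mathrm{id}}\mathbb{Z}$ concentrated in degrees $4$ and $3$, one gets $\pi_4'=\ker\overline{\partial}_3\cong\mathbb{Z}$ whereas $\pi_4\cong\ker\partial_3/\partial_4(NG_4)=1$, so the equality you assert (and with it the canonical comparison map being an isomorphism) is false for general $\mathbf{G}$.

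So your step for $n=4$ would fail as stated; what it proves is the proposition under an additional hypothesis such as $NG_4\subseteq D_4$ (equivalently $NG_4\cap D_4=NG_4$), and in particular it holds trivially when the Moore complex has length $3$, where $NG_4=1$ and both subgroups vanish -- which is the situation actually needed for the equivalence theorem. To be fair, the paper's own proof supplies no more detail than the two displayed lists of groups (its formula $\pi_n=0$ for $n>4$ already tacitly presupposes such a restriction), so your proposal is not worse than the source; but to make it a proof you must either add the hypothesis that $\mathbf{G}$ has Moore complex of length $3$ (or $G_4=D_4$), or otherwise argue the identity $\partial_4(NG_4)=\partial_4(NG_4\cap D_4)$, which cannot be done in the stated generality.
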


\begin{proof}
Let $\mathbf{G}$ be a simplicial group. The $n$th homotopy groups of $%
\mathbf{G}$ is the $n$th homology of the Moore complex of $\mathbf{G}$, i.e.,%
\begin{equation*}
\pi _{n}(\mathbf{G})\cong H_{n}(\mathbf{NG})\cong \frac{\ker
d_{n-1}^{n-1}\cap NG_{n-1}}{d_{n}^{n}(NG_{n})}
\end{equation*}%
Thus the homotopy groups $\pi _{n}(\mathbf{G)=}$ $\pi _{n}$ of
$\mathbf{G}$ are
\begin{equation*}
\pi _{n}=\left\{
\begin{array}{lll}
NG_{0}/d_{1}(NG_{1}) &  & n=1 \\
\dfrac{\ker d_{n-1}^{n-1}\cap NG_{n-1}}{d_{n}^{n}(NG_{n})} &  & n=2,3,4 \\
0 &  & n=0\text{ or }n>4%
\end{array}%
\right.
\end{equation*}%
and the homotopy groups $\pi _{n}^{\prime }$ of its $3$-crossed
module are
\begin{equation*}
\pi _{n}^{\prime }=\left\{
\begin{array}{lll}
NG_{0}/\partial _{1}(M) &  & n=1 \\
\ker \partial _{1}/Im(\partial _{2}) &  & n=2 \\
\ker \partial _{2}/Im(\partial _{3}) &  & n=3 \\
\ker \partial _{3} &  & n=4 \\
0 &  & n=0\text{ or }n>4.%
\end{array}%
\right.
\end{equation*}%
The isomorphism $\pi _{n}\cong \pi _{n}^{\prime }$ can be shown by a
direct calculation.
\end{proof}

\subsection{Crossed $3$-cubes}

Crossed squares (or crossed $2$-cubes) were introduced by D.
Guin-Wal\'{e}ry and J.-L. Loday\ \cite{wl}, see also \cite{jl} and
\cite{bl1}.

\begin{definition}
A crossed square is a commutative diagram of group morphisms%
\begin{equation*}
\text{$\xymatrix@R=40pt@C=60pt{L \ar[d]%
_{u} \ar[r]^-{f} & M \ar[d]_{v} \\
N \ar[r]_{g} & P  }$}
\end{equation*}%
\noindent with action of $P$ on every other group and a function
$h:M\times N\rightarrow L$ such that

\begin{enumerate}
\item[(1)] the maps $f$ and $u$ are $P$-equivariant and $g$, $v$, $v\circ f$
and $g\circ u$ are crossed modules,

\item[(2)] $f\circ h(x,y)=x^{g(y)}x^{-1}$, $u\circ h(x,y)=^{v(x)}yy^{-1}$,

\item[(3)] $h(f(z),y)=z^{g(y)}z^{-1}$, $\ h(x,u(z))=^{v(x)}zz^{-1}$,

\item[(4)] $h(xx^{\prime },y)=^{v(x)}h(x^{\prime },y)h(x,y)$, $\
h(x,yy^{\prime })=h(x,y)^{g(y)}h(x,y^{\prime })$,

\item[(5)] $h(^{t}x,^{t}y)=^{t}h(x,y)$

\end{enumerate}
\noindent for $x,x^{\prime }\in M$, $y,y^{\prime }\in N$, $z\in L$
and $t\in P$.
\end{definition}

It is a consequence of the definition that $f:L\rightarrow M$ and $%
u:L\rightarrow N$ are crossed modules where $M$ and $N$ act on $L$
via their images in $P$. A crossed square can be seen as a  crossed
module in the category of crossed modules. Also crossed squares
which were related to simplicial groups in the same way crossed
modules.

A crossed square can be seen as a complex of crossed modules of
length one
and thus, Conduch\'{e} \cite{cond}, gave a direct proof from crossed squares to $2$%
-crossed modules. This construction is the following:

Let
\begin{equation*}
\text{$\xymatrix@R=40pt@C=60pt{L \ar[d]%
_{u} \ar[r]^-{f} & M \ar[d]_{v} \\
N \ar[r]_{g} & P  }$}
\end{equation*}%
be a crossed square. Then seeing the horizontal morphisms as a
complex of
crossed modules, the mapping cone of this square is a $2$-crossed module%
\[
L\overset{\partial _{2}}{\longrightarrow }M\rtimes N\overset{\partial _{1}}{%
\longrightarrow }P,
\]%
where $\partial _{2}(z)=(f(z)^{-1},u(z))$ for $z\in L,\partial
_{1}(x,y)=g(x)g(y)$ for $x\in M$ and $y\in N,$ and the Peiffer
lifting is
given by%
\[
\left\{ (x,y),(x^{\prime },y^{\prime })\right\} =h(x,yy^{\prime
}y^{-1}).
\]

Crossed squares were generalised by G. Ellis in \cite{e1,es} called
\textquotedblleft Crossed $n$-cubes\textquotedblright\ which was
related to simplicial groups by T. Porter in \cite{p6}. Here we only
consider this
construction for $n=3$ and look at the relation between crossed $3$-cubes (see Appendix B) and $3$%
-crossed modules.

Let
\begin{equation*}
K\overset{\partial _{3}}{\longrightarrow }L\overset{\partial _{2}}{%
\longrightarrow }M\overset{\partial _{1}}{\longrightarrow }N
\end{equation*}%
be a $3$-crossed module and let $\mathbf{G}$ be the corresponding
simplicial group. The crossed $3$-cube associated to $\mathbf{G}$
defined by T.Porter in \cite{p6} is, up to a canonical isomorphism
\begin{equation*}
\text{$\xymatrix@R=25pt@C=25pt{
  & \text{ker}d_{0}^{2}\cap \text{ker}d_{1}^{2} \ar[rr]^{\nu_{Q}} \ar'[d][dd]_{\nu_{P}}
      &  & \text{ker}d_{1}^{2} \ar[dd]^{\delta_{2}}        \\
 NG_{3}  \ar[ur]^{\lambda_{N}} \ar[rr]^->>>>>{\lambda_{M}} \ar[dd]^{\lambda_{L}}
      &  & \text{ker}d_{1}^{2}\cap \text{ker}d_{2}^{2} \ar[ur]_{\nu_{Q}} \ar[dd]_>>>>>>{\nu_{Q}} \\
  & \text{ker}d_{0}^{2} \ar'[r][rr]^{\delta_{1}}
      &  & G_{2}                \\
 \text{ker}d_{0}^{2}\cap \text{ker}d_{2}^{2} \ar[rr]_{\nu_{Q}} \ar[ur]_{\nu_{P}}
      &  & \text{ker}d_{2}^{2} \ar[ur]_{\delta_{3}}     }$
}
\end{equation*}%
\noindent where $\lambda _{L}$ $,\lambda _{M},$ $\lambda _{N}$ are
restriction \ of $d_{3}^{3}$ and the others are inclusions \ The $h$-maps are%
\begin{eqnarray*}
&&%
\begin{array}{cccll}
h_{1} & : & \ker d_{1}^{2}\times \ker d_{0}^{2}\cap \ker d_{2}^{2} &
\rightarrow & NG_{3} \\
&  & (x,y) & \mapsto &
[s_{1}x_{2}s_{0}x_{2}^{-1},s_{2}y_{2}^{-1}s_{1}y_{2}^{-1}]%
\end{array}
\\
&&%
\begin{array}{cccll}
h_{2} & : & \ker d_{0}^{2}\times \ker d_{1}^{2}\cap \ker d_{2}^{2} &
\rightarrow & NG_{3} \\
&  & (x,y) & \mapsto & [s_{1}x_{2},s_{2}y_{2}s_{1}y_{2}^{-1}s_{0}y_{2}]%
\end{array}
\\
&&%
\begin{array}{cccll}
h_{3} & : & \ker d_{0}^{2}\cap \ker d_{1}^{2}\times \ker d_{2}^{2} &
\rightarrow & NG_{3} \\
&  & (x,y) & \mapsto & [s_{2}x_{2},s_{2}y_{2}s_{1}y_{2}^{-1}]%
\end{array}
\\
&&%
\begin{array}{cccll}
h_{7} & : & \ker d_{0}^{2}\cap \ker d_{2}^{2}\times \ker
d_{1}^{2}\cap \ker
d_{2}^{2} & \rightarrow & NG_{3} \\
&  & (x,y) & \mapsto & h_{2}(ix,y)=h_{2}(x,y)%
\end{array}
\\
&&%
\begin{array}{cccll}
h_{8} & : & \ker d_{0}^{2}\cap \ker d_{1}^{2}\times \ker
d_{0}^{2}\cap \ker
d_{2}^{2} & \rightarrow & NG_{3} \\
&  & (x,y) & \mapsto & h_{3}(x,iy)=h_{3}(x,y)%
\end{array}
\\
&&%
\begin{array}{cccll}
h_{9} & : & \ker d_{0}^{2}\cap \ker d_{1}^{2}\times \ker
d_{1}^{2}\cap \ker
d_{2}^{2} & \rightarrow & NG_{3} \\
&  & (x,y) & \mapsto & h_{3}(x,iy)=h_{3}(x,y)%
\end{array}%
\end{eqnarray*}%
and the others are commutators. (The name of the maps are given with
respect to the crossed $3$-cube definition in \cite{e1}.)\

Then in terms of the $3$-crossed module, this crossed cube can be written as%
\begin{equation*}
\text{$\xymatrix@R=25pt@C=25pt{
  & L \ar[rr]^{\nu_{Q}} \ar'[d][dd]_{\nu_{P}}
      &  & \overline{L \rtimes M} \ar[dd]^{\delta_{2}}        \\
 K  \ar[ur]^{\lambda_{N}} \ar[rr]^->>>>>{\lambda_{M}} \ar[dd]^{\lambda_{L}}
      &  & \overline{\overline{L}} \ar[ur]_{\nu_{Q}} \ar[dd]_>>>>>>{\nu_{Q}} \\
  & L \rtimes M \ar'[r][rr]^{\delta_{1}}
      &  & (L \rtimes M) \rtimes (M \rtimes N)                \\
 \overline{L} \ar[rr]_{\nu_{Q}} \ar[ur]_{\nu_{P}}
      &  & \overline{\overline{L \rtimes M}} \ar[ur]_{\delta_{3}}     }$}
\end{equation*}%
\noindent where

$$ L\ltimes M \cong  \{(l,m,1,1):l\in L,m\in M \},$$

$$
\overline{L\ltimes M}  =  \{(l,m,m^{\prime },1):l\in L,m,m^{\prime
}\in M,mm^{\prime }=1,l\in L,m\in M \},
$$

$$
\overline{\overline{L\ltimes M}}  =  \{(l,m,m^{\prime },n):\partial
_{2}(l)=1,\partial _{1}(m^{\prime })n=1,l\in L,m\in M \},$$

$%
\begin{array}{ccc}
L & \cong & \{(l,1,1,1):l\in L\},%
\end{array}%
$
\\

$%
\begin{array}{ccl}
\overline{L} & = & \{(l,m,1,1):\partial _{2}lm=1,l\in L,m\in M\} \\
& = & \{(l,\partial _{2}(l^{-1}),1,1):l\in L\},%
\end{array}%
$
\\

$%
\begin{array}{ccl}
\overline{\overline{L}} & = & \{(l,m,m^{\prime },1):mm^{\prime
}=1,\partial
_{2}(l)m=1,\partial _{1}(m^{\prime })n=1,l\in L,m\in M,n\in N\} \\
& = & \{(l,\partial _{2}(l^{-1}),\partial _{2}(l),1):l\in L\}.%
\end{array}%
$

By definition 2.7 given in \cite{cond} we have the mapping cone of
this
crossed $3$-cube as%
\begin{equation*}
K\rightarrow (L\ltimes \overline{\overline{L}})\ltimes \overline{L}%
\rightarrow (\overline{L\ltimes M})((L\ltimes M)\ltimes (\overline{\overline{%
L\ltimes M}}))\rightarrow (L\ltimes M)\ltimes (M\ltimes N).
\end{equation*}

\begin{example}
Let $K\overset{\partial _{3}}{\rightarrow }L\overset{\partial _{2}}{%
\rightarrow }M\overset{\partial _{1}}{\rightarrow }N$ be a
$3$-crossed module. If $M=\{1\},$ then for $i=1,2,3$ the commutative
diagram
\end{example}

\begin{equation*}
\begin{array}{cc}
&  \\
C_{i} & = \\
&
\end{array}%
\begin{array}{cc}
&  \\
\left(
\def\objectstyle{\scriptstyle}
\def\labelstyle{\scriptstyle}
\vcenter{\xymatrix@R=50pt@C=50pt{
  K \ar[r]^{\partial_{3}} \ar[d]_{\partial'_{3}}  & L  \ar[d]^{Id}   \\
 L  \ar[r]_{Id} & L  }} \right) &  \\
&
\end{array}%
\end{equation*}

\noindent is a crossed square with the following $h_{i}$-maps \
\begin{equation*}
\begin{tabular}{ll}
$h_{1}=$ & $\left\{ ,\right\} _{(2)(1)}:L\times L\rightarrow K$ \\
$h_{2}=$ & $\left\{ ,\right\} _{(0)(2)}:L\times L\rightarrow K$%
\end{tabular}%
\end{equation*}%
\begin{equation*}
\begin{tabular}{ll}
$h_{3}=$ & $\left\{ ,\right\} _{(0)(1)}:L\times L\rightarrow K$ \\
& $(x,y)\mapsto \left\{ y,x\right\} _{(1)(0)}^{-1}$%
\end{tabular}%
\end{equation*}%
where the action of $L$ on itself is by conjugation.

Since $M=\left\{ 1\right\} ,$ $\partial _{2}(l)=1_{M}$ for all $l\in
L$ .
Thus from the $3$-crossed module axioms\textbf{\ }we find%
\begin{equation*}
\begin{tabular}{lll}
$\left\{ l,\partial _{3}k\right\} _{(2)(1)}$ & $=$ & $(l\cdot k)k^{-1}$ \\
$\left\{ ll^{\prime },l^{\prime \prime }\right\} _{(2)(1)}$ & $=$ &
$l\cdot \left\{ l^{\prime },l^{\prime \prime }\right\}
_{(2)(1)}\left\{ l,l^{\prime
\prime }\right\} _{(2)(1)}$ \\
$\left\{ l,l^{\prime }l^{\prime \prime }\right\} _{(2)(1)}$ & $=$ &
$\left\{ l,l^{\prime }\right\} _{(2)(1)}\text{ }l^{\prime }\left\{
l,l^{\prime \prime
}\right\} _{(2)(1)}$ \\
$\partial _{3}\left\{ l,l^{\prime }\right\} _{(2)(1)}$ & $=$ & $%
l(^{l^{\prime }}l^{-1})$ \\
$(\left\{ l^{\prime },l\right\} _{(1)(0)})^{-1}$ & $=$ & $\left\{
l,l^{\prime }\right\} _{(2)(1)}$ \\
$\left\{ l,l^{\prime }\right\} _{(0)(2)}$ & $=$ & $1$%
\end{tabular}%
\end{equation*}%
for all $l,l^{\prime },l^{\prime \prime }\in L,$ $k,k^{\prime }\in
K$. Using these equalities and the $3$-crossed module axioms crossed
square conditions can be easily verified.

In this example the result is trivial for the $h$-map $h_{1}$ from \cite%
{conduche} since $\left\{ ,\right\} _{(2)(1)}$ is a 2-crossed
module. Here the liftings $\left\{ ,\right\} _{(0)(2)}$, $\left\{
,\right\} _{(0)(1)}$ are not 2-crossed modules but the associated
$h$-maps are crossed squares.

\begin{example}
In the universal cube definition given in \cite{e1}, take
$P,R,N,M=L,$ $S=M$ and $T_{0}=K\otimes K\otimes K$. Then

\begin{center}
$\xymatrix@R=15pt@C=15pt{
  & K \ar[rr] \ar'[d][dd]
      &  & L \ar[dd]        \\
 K \otimes K \otimes K \ar[ur]\ar[rr]\ar[dd]
      &  & K \ar[ur]\ar[dd] \\
  & L \ar'[r][rr]
      &  & L                \\
 K \ar[rr]\ar[ur]
      &  & L \ar[ur]        }$
\end{center}%

\noindent is a universal crossed 3-cube with the crossed squares
obtained by the Peiffer maps $\left\{ ,\right\} _{(0)(1)},$ $\left\{
,\right\} _{(2)(1)}, $ $\left\{ ,\right\} _{(0)(2)}$ given in the
above proposition.
\end{example}

\section{Appendix A}

\textbf{Proof of proposition 2}:

\textbf{3CM1) }%
\begin{eqnarray*}
\overline{\partial }_{3}\left( \left\{ x_{2},y_{2}\right\}
_{(2)(1)}\right)
&=&\left[ x_{2},y_{2}\right] \left[ y_{2},s_{1}\partial _{2}x_{2}\right] \\
&=&x_{2}y_{2}x_{2}^{-1}y_{2}^{-1}y_{2}s_{1}\partial
_{2}x_{2}y_{2}^{-1}s_{1}\partial _{2}x_{2}^{-1} \\
&=&x_{2}y_{2}x_{2}^{-1}(^{\partial _{2}x_{2}}y_{2})^{-1}
\end{eqnarray*}%
\textbf{\ }Since
\begin{equation*}
d_{4}(F_{(1)(3,2)}(x_{3},y_{2}))=\left[ s_{1}d_{3}x_{3},s_{2}y_{2}\right] %
\left[ s_{2}y_{2},s_{2}d_{3}x_{3}\right] \left[
x_{3},s_{2}y_{2}\right]
\end{equation*}%
we find
\begin{eqnarray*}
\{\overline{\partial }_{3}x_{3},y_{2}\}_{(2)(1)}
&=&[s_{1}\overline{\partial
}_{3}x_{3},s_{2}y_{2}][s_{2}y_{2},s_{2}\overline{\partial }_{3}x_{3}] \\
\ &\equiv &[x_{3},s_{2}y_{2}]\text{ \ \ \ }mod\text{ }\partial
_{4}(NG_{4}\cap D_{4}) \\
&=&x_{3}(^{y_{2}}x_{3})^{-1}
\end{eqnarray*}%
\textbf{\ }Since
\begin{equation*}
d_{4}(F_{(3,1)(2)}(x_{2},y_{3}))=\left[ s_{1}x_{2},s_{2}d_{3}y_{3}\right] %
\left[ s_{2}d_{3}y_{3},s_{2}x_{2}\right] \left[ s_{2}x_{2},y_{3}\right] %
\left[ y_{3},s_{1}x_{2}\right]
\end{equation*}%
we find
\begin{eqnarray*}
\left\{ x_{2},\overline{\partial }_{3}y_{3}\right\} _{(2)(1)}
&=&\left[
s_{2}x_{2},s_{2}\overline{\partial }_{3}y_{3}\right] \left[ s_{2}\overline{%
\partial }_{3}y_{3},s_{1}x_{2}\right] \\
\text{ } &\equiv &\left[ s_{2}x_{2},y_{3}\right] \left[ y_{3},s_{1}x_{2}%
\right] \text{ \ \ \ }mod\text{ }\partial _{4}(NG_{4}\cap D_{4}) \\
&=&^{x_{2}}y_{3}s_{1}x_{2}y_{3}^{-1}s_{1}x_{2}^{-1} \\
\text{ } &\equiv &^{x_{2}}y_{3}(^{\partial
_{2}x_{2}}y_{3})^{-1}\text{ \ \ \ }mod\text{ }\partial
_{4}(NG_{4}\cap D_{4})
\end{eqnarray*}%
\textbf{\ }%
\begin{eqnarray*}
\left\{ x_{2}y_{2},z_{2}\right\} _{(2)(1)} &=&\left[
s_{1}(x_{2}y_{2}),s_{2}z_{2}\right] \left[ s_{2}z_{2},s_{2}(x_{2}y_{2})%
\right] \\
&=&s_{1}(x_{2}y_{2})s_{2}z_{2}s_{1}(x_{2}y_{2})^{-1}s_{2}(x_{2}y_{2})s_{2}z_{2}^{-1}s_{2}(x_{2}y_{2})^{-1}
\\
&\equiv
&s_{2}(x_{2}y_{2})s_{2}z_{2}^{-1}s_{2}(x_{2}y_{2})^{-1}s_{1}(x_{2}y_{2}) \\
&&s_{2}z_{2}s_{1}(x_{2}y_{2})^{-1}\text{ \ }mod\text{ }\partial
_{4}(NG_{4}\cap D_{4}) \\
&=&\left\{ x_{2},y_{2}z_{2}y_{2}^{-1}\right\} _{(2)(1)}\text{
}^{\partial _{1}x_{2}}\left\{ y_{2},z_{2}\right\} _{(2)(1)}
\end{eqnarray*}%
\textbf{\ }%
\begin{eqnarray*}
\left\{ x_{2},y_{2}z_{2}\right\} _{(2)(1)} &=&\left[
s_{1}(x_{2}),s_{2}(y_{2}z_{2})\right] \left[ s_{2}(y_{2}z_{2}),s_{2}(x_{2})%
\right] \\
&\equiv &\left[ s_{2}(x_{2}),s_{2}(y_{2}z_{2})\right] \left[
s_{2}(y_{2}z_{2}),s_{1}(x_{2})\right] \\
&&(s_{2}(x_{2})s_{2}(y_{2})s_{2}(x_{2})^{-1})s_{1}(x_{2})s_{2}(y_{2})s_{1}(x_{2})^{-1}%
\text{ \ }mod\text{ }\partial _{4}(NG_{4}\cap D_{4}) \\
&=&(x_{2}y_{2}x_{2}^{-1})\cdot \left\{ x_{2},z_{2}\right\}
_{(2)(1)}\left\{ x_{2},y_{2}\right\} _{(2)(1)}
\end{eqnarray*}

\textbf{3CM2) }Since
\begin{eqnarray*}
d_{4}(F_{(3,2,0)(1)}(x_{1},y_{3})) &=&\left[ s_{2}s_{0}x_{1},s_{1}d_{3}y_{3}%
\right] \left[ s_{1}d_{3}y_{3},s_{2}s_{1}x_{1}\right] \left[
s_{2}s_{1}x_{1},s_{2}d_{3}y_{3}\right] \\
&&\left[ s_{2}d_{3}y_{3},\right] \left[ s_{2}s_{0}x_{1},y_{3}\right]
\left[ y_{3},s_{2}s_{1}x_{1}\right]
\end{eqnarray*}%
\begin{eqnarray*}
d_{4}(F_{(3,1,0)(2)}(x_{1},y_{3})) &=&\left[ s_{1}s_{0}x_{1},s_{2}d_{3}y_{3}%
\right] \left[ s_{2}d_{3}y_{3}\ ,s_{2}s_{0}x_{1}\right] \\
&&\left[ s_{2}s_{0}x_{1},y_{3}\right] \left[
y_{3},s_{1}s_{0}x_{1}\right]
\end{eqnarray*}%
and%
\begin{equation*}
d_{4}(F_{(2,1,0)(3)}(x_{1},y_{3}))=\left[ s_{2}s_{1}s_{0}d_{1}x_{1},y_{3}%
\right] \left[ y_{3},s_{1}s_{0}x_{1}\right]
\end{equation*}%
we find
\begin{equation*}
\{x_{1},\overline{\partial }_{3}y_{3}\}_{(1,0)(2)}=\{x_{1},\overline{%
\partial }_{3}y_{3}\}_{(2,0)(1)}\{\overline{\partial }_{3}y_{3},x_{1}%
\}_{(0)(2,1)}y_{3}(\text{ }^{\partial _{1}x_{1}}y_{3})^{-1}
\end{equation*}

\textbf{3CM5)} Since
\begin{eqnarray*}
d_{4}(F_{(3,0)(2,1)}) &=&[s_{0}x_{2},s_{2}s_{1}\partial
_{2}y_{2}][s_{2}s_{1}\partial
_{2}y_{2},s_{1}x_{2}][s_{2}x_{2},s_{2}s_{1}\partial _{2}y_{2}] \\
&&[s_{1}y_{2},s_{2}x_{2}][s_{1}x_{2},s_{1}y_{2}][s_{1}y_{2},s_{0}x_{2}]
\end{eqnarray*}%
we find
\begin{eqnarray*}
\{x_{2},\partial _{2}y_{2}\}_{(0)(2,1)}
&=&[s_{0}x_{2},s_{2}s_{1}\partial _{2}y_{2}][s_{2}s_{1}\partial
_{2}y_{2},s_{1}x_{2}][s_{2}x_{2},s_{2}s_{1}\partial _{2}y_{2}] \\
\ &\equiv
&[s_{1}y_{2},s_{2}x_{2}][s_{1}x_{2},s_{1}y_{2}][s_{1}y_{2},s_{0}x_{2}]\text{
\ \ \ \ \ \ \ }mod\text{ }\partial _{4}(NG_{4}\cap D_{4}) \\
&=&(\{y_{2},x_{2}\}_{(1)(2)})^{-1}\{x_{2},y_{2}\}_{(1)(0)}
\end{eqnarray*}

\textbf{3CM6) }Since
\begin{eqnarray*}
d_{4}(F_{(2,0)(3,1)}(x_{2},y_{2}))
&=&[s_{2}s_{0}d_{2}x_{2},s_{1}y_{2}][s_{1}y_{2},s_{2}s_{1}d_{2}x_{2}] \\
&&[s_{2}s_{1}d_{2}x_{2},s_{2}y_{2}][s_{2}y_{2},s_{2}s_{0}d_{2}x_{2}] \\
&&[s_{0}x_{2},s_{2}y_{2}][s_{2}y_{2},s_{1}x_{2}] \\
&&[s_{1}x_{2},s_{1}y_{2}][s_{1}y_{2},s_{0}x_{2}]
\end{eqnarray*}%
we find
\begin{eqnarray*}
\{\partial _{2}x_{2},y_{2}\}_{(2,0)(1)} &=&[s_{2}s_{0}\partial
_{2}x_{2},s_{1}y_{2}][s_{1}y_{2},s_{2}s_{1}\partial _{2}x_{2}] \\
&&[s_{2}s_{1}\partial
_{2}x_{2},s_{2}y_{2}][s_{2}y_{2},s_{2}s_{0}\partial
_{2}x_{2}] \\
&\equiv &[s_{0}x_{2},s_{2}y_{2}][s_{2}y_{2},s_{1}x_{2}] \\
&=&\{x_{2},y_{2}\}_{(0)(2)}^{-1}\text{ }^{[y_{2},x_{2}]}(\{x_{2},y_{2}%
\}_{(2)(1)})\{x_{2},y_{2}\}_{(1)(0)}
\end{eqnarray*}

\textbf{3CM7) }Since
\begin{equation*}
d_{4}(F_{(1,0)(3,2)}(x_{2},y_{2}))=[s_{1}s_{0}\overline{\partial }%
_{2}x_{2},s_{2}y_{2}][s_{2}y_{2},s_{2}s_{0}\overline{\partial }%
_{2}x_{2}][s_{0}x_{2},s_{2}y_{2}]
\end{equation*}%
we find
\begin{eqnarray*}
\{\partial _{2}x_{2},y_{2}\}_{(1,0)(2)} &=&[s_{1}s_{0}\overline{\partial }%
_{2}x_{2},s_{2}y_{2}][s_{2}y_{2},s_{2}s_{0}\overline{\partial }_{2}x_{2}] \\
&\equiv &[s_{0}x_{2},s_{2}y_{2}]\text{ \ \ \ \ \ }mod\text{
}\partial
_{4}(NG_{4}\cap D_{4}) \\
&=&\{x_{2},y_{2}\}_{(0)(2)}^{-1}
\end{eqnarray*}

\textbf{3CM8)}%
\begin{eqnarray*}
\overline{\partial }_{3}(\left\{ x_{2},y_{2}\right\} _{(1)(0)})
&=&\left[
x_{2},y_{2}\right] \left[ \overline{\partial }_{3}s_{1}x_{2},\overline{%
\partial }_{3}s_{1}y_{2}\right] \left[ \overline{\partial }_{3}s_{1}y_{2},%
\overline{\partial }_{3}s_{0}x_{2}\right] \\
&=&\left[ x_{2},y_{2}\right] s_{1}\left[ \partial _{2}x_{2},\partial
_{2}y_{2}\right] \left[ s_{1}\partial _{2}y_{2},s_{0}\partial _{2}x_{2}%
\right] \\
&=&\left[ x_{2},y_{2}\right] \left\{ \partial _{2}x_{2},\partial
_{2}y_{2}\right\}
\end{eqnarray*}

\textbf{3CM9)}%
\begin{equation*}
\overline{\partial }_{3}\left( \left\{ x_{2},y_{2}\right\} _{(0)(2)}\right) =%
\overline{\partial }_{3}\left( \left\{ \partial
_{2}x_{2},y_{2}\right\} _{(1,0)(2)}\right) ^{-1}
\end{equation*}

\textbf{3CM10)}%
\begin{eqnarray*}
\overline{\partial }_{3}\left\{ x_{2},y_{1}\right\} _{(0)(2,1)} &=&\overline{%
\partial }_{3}\left( \left[ s_{2}s_{1}y_{1},s_{2}x_{2}\right] \left[
s_{1}x_{2},s_{2}s_{1}y_{1}\right] \left[
s_{2}s_{1}y_{1},s_{0}x_{2}\right]
\right) \\
&=&\left[ s_{1}y_{1},x_{2}\right] \left[ \overline{\partial }%
_{3}s_{1}x_{2},s_{1}y_{1}\right] \left[ s_{1}y_{1},\overline{\partial }%
_{3}s_{0}x_{2}\right] \\
&=&^{y_{1}}x_{2}x_{2}^{-1}\left\{ \partial _{2}x_{2},y_{1}\right\}
\end{eqnarray*}

\textbf{3CM11) }Since
\begin{eqnarray*}
\overline{\partial }_{3}\left\{ x_{1},y_{2}\right\} _{(1,0)(2)}
&=&\left[
s_{0}x_{1},y_{2}\right] \left[ y_{2},\overline{\partial }_{3}s_{1}s_{0}x_{1}%
\right] \\
\overline{\partial }_{3}\left\{ x_{1},y_{2}\right\}
_{(1,0)(2)}\left[ \overline{\partial
}_{3}s_{1}s_{0}x_{1},y_{2}\right] &=&\left[ s_{0}x_{1},y_{2}\right]
\end{eqnarray*}%
\begin{eqnarray*}
\overline{\partial }_{3}\left\{ x_{1},y_{2}\right\} _{(2,0)(1)}
&=&\left[ s_{0}x_{1},y_{2}\right] \left[ y_{2},s_{1}x_{1}\right]
\left[ s_{1}x_{1},\partial _{3}s_{1}y_{2}\right] \left[ \partial
_{3}s_{1}y_{2},s_{0}x_{1}\right] \\
&=&\left[ s_{0}x_{1},y_{2}\right] \left[ y_{2},s_{1}x_{1}\right]
\left[ s_{1}x_{1},s_{1}\partial _{2}y_{2}\right] \left[
s_{1}\partial
_{2}y_{2},s_{0}x_{1}\right] \\
&=&\left[ s_{0}x_{1},y_{2}\right] \left[ y_{2},s_{1}x_{1}\right]
\left\{ x_{1},\partial _{2}y_{2}\right\}
\end{eqnarray*}%
we find
\begin{eqnarray*}
\overline{\partial }_{3}\left\{ x_{1},y_{2}\right\} _{(2,0)(1)} &=&\overline{%
\partial }_{3}\left\{ x_{1},y_{2}\right\} _{(1,0)(2)}\left[ \overline{%
\partial }_{3}s_{1}s_{0}x_{1},y_{2}\right] \left[ y_{2},s_{1}x_{1}\right]
\left\{ x_{1},\partial _{2}y_{2}\right\} \\
&=&\overline{\partial }_{3}\left\{ x_{1},y_{2}\right\}
_{(1,0)(2)}\left[ s_{1}s_{0}\partial _{1}x_{1},y_{2}\right] \left[
y_{2},s_{1}x_{1}\right]
\left\{ x_{1},\partial _{2}y_{2}\right\} \\
&=&\overline{\partial }_{3}\left\{ x_{1},y_{2}\right\} _{(1,0)(2)}\text{ }%
^{\partial _{1}x_{1}}y_{2}\text{ }^{x_{1}}y_{2}\left\{
x_{1},\partial _{2}y_{2}\right\}
\end{eqnarray*}

\textbf{3CM12)} Since
\begin{equation*}
d_{4}(F_{(0)(3,1)}(x_{3},y_{2}))=[s_{0}d_{3}x_{3},s_{1}y_{2}][s_{1}y_{2},s_{1}d_{3}x_{3}][s_{2}d_{3}x_{3},s_{2}y_{2}][s_{2}y_{2},x_{3}]
\end{equation*}%
we find
\begin{eqnarray*}
\{\overline{\partial }_{3}x_{3},y_{2}\}_{(1)(0)}
&=&[s_{0}\overline{\partial
}_{3}x_{3},s_{1}y_{2}][s_{1}y_{2},s_{1}\overline{\partial }_{3}x_{3}][s_{2}%
\overline{\partial }_{3}x_{3},s_{2}y_{2}] \\
\ &\equiv &[s_{2}y_{2},x_{3}]\text{ \ \ \ \ \ \ \ \ }mod\text{
}\partial
_{4}(NG_{4}\cap D_{4}) \\
&=&(^{y_{2}}x_{3})x_{3}^{-1}
\end{eqnarray*}%
\textbf{\ }Since
\begin{eqnarray*}
d_{4}(F_{(3,0)(1)}(x_{2},y_{3})) &=&\left[ s_{0}x_{2},s_{1}d_{3}y_{3}\right] %
\left[ s_{1}d_{3}y_{3},s_{1}x_{2}\right] \\
&&\left[ s_{2}x_{2},s_{2}d_{3}y_{3}\right] \left[
y_{3},s_{2}x_{2}\right]
\end{eqnarray*}%
we find
\begin{eqnarray*}
\left\{ x_{2},\overline{\partial }_{3}y_{3}\right\} _{(1)(0)}
&=&\left[
s_{0}x_{2},s_{1}\overline{\partial }_{3}y_{3}\right] \left[ s_{1}\overline{%
\partial }_{3}y_{3},s_{1}x_{2}\right] \left[ s_{2}x_{2},s_{2}\overline{%
\partial }_{3}y_{3}\right] \\
\text{ } &\equiv &\left[ y_{3},s_{2}x_{2}\right] \text{ \ \ \ \ \ \ \ \ }mod%
\text{ }\partial _{4}(NG_{4}\cap D_{4}) \\
&=&y_{3}(^{x_{2}}y_{3})^{-1}
\end{eqnarray*}

\textbf{3CM14) }Since
\begin{equation*}
d_{4}(F_{(0)(3,2)}(x_{3},y_{2}))=[s_{0}d_{3}x_{3},s_{2}y_{2}]
\end{equation*}%
we find
\begin{eqnarray*}
\{\overline{\partial }_{3}x_{3},y_{2}\}_{(0)(2)}
&=&[s_{0}\overline{\partial
}_{3}x_{3},s_{2}y_{2}] \\
\ &\equiv &1\text{ \ }mod\text{ }\partial _{4}(NG_{4}\cap D_{4})
\end{eqnarray*}

\textbf{3CM15)} Since \textbf{\ }%
\begin{equation*}
d_{4}(F_{(3,0)(2)}(x_{2},y_{3}))=\left[ s_{0}x_{2},s_{2}d_{3}y_{3}\right] %
\left[ y_{3},s_{0}x_{2}\right]
\end{equation*}%
and
\begin{equation*}
d_{4}(F_{(1,0)(2)}(x_{2},y_{3}))=\left[ s_{1}s_{0}\partial
_{2}x_{2},s_{2}\partial _{3}y_{3}\right] \left[ s_{2}\partial
_{3}y_{3},s_{2}s_{0}\partial _{2}x_{2}\right] \left[
s_{0}x_{2},y_{3}\right]
\end{equation*}%
we find
\begin{eqnarray*}
\left\{ x_{2},\overline{\partial }_{3}y_{3}\right\} _{(0)(2)}
&=&\left[
s_{0}x_{2},s_{2}\overline{\partial }_{3}y_{3}\right] \\
\text{ } &\equiv &\left[ y_{3},s_{0}x_{2}\right] \text{ \ \ \ \ \
}mod\text{
}\partial _{4}(NG_{4}\cap D_{4}) \\
\text{ } &\equiv &\left[ s_{2}s_{0}\partial _{2}x_{2},s_{2}\overline{%
\partial }_{3}y_{3}\right] \left[ s_{2}\overline{\partial }%
_{3}y_{3},s_{1}s_{0}\partial _{2}x_{2}\right] \text{ \ \ \ }mod\text{ }%
\partial _{4}(NG_{4}\cap D_{4})\text{\ \ } \\
&=&\left\{ \partial _{2}\left( x_{2}\right) ,\overline{\partial
}_{3}\left( y_{3}\right) \right\} _{(1,0)(2)}^{-1}
\end{eqnarray*}

\textbf{3CM16) }Since
\begin{eqnarray*}
d_{4}(F_{(2,0)(1)}(x_{2},y_{3})) &=&\left[ s_{2}s_{0}\partial
_{2}x_{2},s_{1}\partial _{3}y_{3}\right] \left[ s_{1}\partial
_{3}y_{3},s_{2}s_{1}\partial _{2}x_{2}\right] \\
&&\left[ s_{2}s_{1}\partial _{2}x_{2},s_{2}\partial _{3}y_{3}\right]
\left[
s_{2}\partial _{3}y_{3},s_{2}s_{0}\partial _{2}x_{2}\right] \\
&&\left[ s_{0}x_{2},y_{3}\right] \left[ y_{3},s_{1}x_{1}\right]
\end{eqnarray*}%
we find
\begin{eqnarray*}
\left\{ \partial _{2}x_{2},\overline{\partial }_{3}y_{3}\right\}
_{(2,0)(1)}
&=&\left[ s_{2}s_{0}\partial _{2}x_{2},s_{1}\overline{\partial }_{3}y_{3}%
\right] \left[ s_{1}\overline{\partial }_{3}y_{3},s_{2}s_{1}\partial
_{2}x_{2}\right] \\
&&\left[ s_{2}s_{1}\partial _{2}x_{2},s_{2}\overline{\partial }_{3}y_{3}%
\right] \left[ s_{2}\overline{\partial }_{3}y_{3},s_{2}s_{0}\partial
_{2}x_{2}\right] \\
\text{ } &\equiv &\left[ s_{0}x_{2},y_{3}\right] \left[ y_{3},s_{1}x_{1}%
\right] \text{ \ \ \ \ }mod\text{ }\partial _{4}(NG_{4}\cap D_{4}) \\
\text{ } &\equiv &\left[ s_{0}x_{2},y_{3}\right] y_{3}(^{\partial
_{2}x_{2}}y_{3})^{-1}\text{ \ \ }mod\text{ }\partial _{4}(NG_{4}\cap
D_{4})
\\
\text{ } &\equiv &\left[ s_{1}s_{0}\partial _{2}x_{2},s_{2}\overline{%
\partial }_{3}y_{3}\right] \left[ s_{2}\overline{\partial }%
_{3}y_{3},s_{2}s_{0}\partial _{2}x_{2}\right] \text{ \ \ \ \ }mod\text{ }%
\partial _{4}(NG_{4}\cap D_{4}) \\
&=&\left\{ \partial _{2}x_{2},\overline{\partial }_{3}y_{3}\right\}
_{(1,0)(2)}
\end{eqnarray*}

\textbf{3CM17) }Since%
\begin{eqnarray*}
d_{4}(F_{(0)(2,1)}(x_{3},y_{2})) &=&\left[
s_{0}d_{3}x_{3},s_{2}s_{1}d_{2}y_{2}\right] \left[
s_{2}s_{1}d_{2}y_{2},s_{1}d_{3}x_{3}\right] \\
&&\left[ s_{2}d_{3}x_{3},s_{2}s_{1}d_{2}y_{2}\right] \left[ s_{1}y_{2},x_{3}%
\right]
\end{eqnarray*}%
and
\begin{equation*}
d_{4}(F_{(0)(2,1)}(x_{2},y_{3}))=\left[ s_{2}s_{1}d_{2}x_{2},y_{3}\right] %
\left[ y_{3},s_{1}x_{2}\right]
\end{equation*}%
we find
\begin{eqnarray*}
\left\{ \overline{\partial }_{3}x_{3},\partial _{2}y_{2}\right\}
_{(0)(2,1)}
&=&\left[ s_{0}\overline{\partial }_{3}x_{3},s_{2}s_{1}\partial _{2}y_{2}%
\right] \left[ s_{2}s_{1}\partial _{2}y_{2},s_{1}\overline{\partial }%
_{3}x_{3}\right] \\
&&\left[ s_{2}\overline{\partial }_{3}x_{3},s_{2}s_{1}\partial _{2}y_{2}%
\right] \\
\text{ } &\equiv &\left[ s_{1}y_{2},x_{3}\right] \text{ \ \ \ \ }mod\text{ }%
\partial _{4}(NG_{4}\cap D_{4}) \\
\text{ } &\equiv &\left[ x_{3},s_{2}s_{1}\partial _{2}y_{2}\right]
\text{ \
\ \ \ }mod\text{ }\partial _{4}(NG_{4}\cap D_{4}) \\
&=&x_{3}(^{\partial _{2}y_{2}}x_{3})^{-1}
\end{eqnarray*}

\textbf{3CM18)}
\begin{eqnarray*}
\partial _{2}\{x_{1},y_{1}\} &=&[x_{1},y_{1}][y_{1},\partial _{2}s_{0}x_{1}]
\\
&=&x_{1}y_{1}x_{1}^{-1}(\partial _{1}x_{1y_{1}})^{-1}
\end{eqnarray*}


\section{Appendix B}

\subsection{Crossed 3-cube}

Given a commutative diagram of groups

\begin{equation*}
\text{$\xymatrix@R=50pt@C=35pt{
  & N \ar[rr]^{\nu_{Q}} \ar'[d][dd]_{\nu_{P}}
      &  & Q \ar[dd]^{\sigma}        \\
 K  \ar[ur]^{\lambda_{N}} \ar[rr]^->>>>>{\lambda_{M}} \ar[dd]^{\lambda_{L}}
      &  & M \ar[ur]_{\nu_{Q}} \ar[dd]_>>>>>>{\nu_{R}} \\
  & P \ar'[r][rr]^{\sigma}
      &  & S                \\
 L \ar[rr]_{\nu_{Q}} \ar[ur]_{\nu_{P}}
      &  & N \ar[ur]_{\sigma}     }$}
\end{equation*}

\noindent in which there is a group action of $S$ on each of the
other seven groups (hence the eight groups act on each other via the
action of $S$), and there are six functions
\begin{eqnarray*}
&&%
\begin{array}{lllll}
h_{1} & : & Q\times L & \rightarrow & K%
\end{array}
\\
&&%
\begin{array}{lllll}
h_{2} & : & P\times M & \rightarrow & K%
\end{array}
\\
&&%
\begin{array}{lllll}
h_{3} & : & N\times R & \rightarrow & K%
\end{array}
\\
&&%
\begin{array}{lllll}
h_{4} & : & P\times R & \rightarrow & L%
\end{array}
\\
&&%
\begin{array}{lllll}
h_{5} & : & Q\times R & \rightarrow & M%
\end{array}
\\
&&%
\begin{array}{lllll}
h_{6} & : & P\times Q & \rightarrow & N%
\end{array}%
\end{eqnarray*}%
we say that this structure is a crossed $3$-cube of groups if

\begin{enumerate}
\item each of the nine squares

$\xymatrix@R=5pt@C=5pt{
  K \ar[rr]  \ar[dd]  && L  \ar[dd] && K \ar[rr]  \ar[dd]  && M  \ar[dd] && K \ar[rr]  \ar[dd]  && R  \ar[dd] && L \ar[rr]  \ar[dd]  && R  \ar[dd] & & M \ar[rr]  \ar[dd]  && R  \ar[dd] & & N \ar[rr]  \ar[dd]  && O  \ar[dd]  \\ \\
 Q  \ar[rr] && S && P \ar[rr] && S && N \ar[rr] && S && P  \ar[rr] && S  && Q \ar[rr] && S && P  \ar[rr] && S   }$

$\xymatrix@R=5pt@C=5pt{
  K \ar[rr]  \ar[dd]  && M  \ar[dd] && K \ar[rr]  \ar[dd]  && L  \ar[dd] && K \ar[rr]  \ar[dd]  && M  \ar[dd]  \\ \\
 L  \ar[rr] && R && N \ar[rr] && P && N \ar[rr] && Q  }$

\noindent is a crossed square; for the last three squares the functions $%
h:L\times M\rightarrow K,$ $h:N\times L\rightarrow K,$ $h:N\times
M\rightarrow K$ are respectively given by $h(l,m)=h(v_{P}l,n),$ $%
h(n,l)=h(n,v_{R}l),$ $h(n,m)=h((n,v_{R}m)$

\item $h((v_{P}n)(v_{P}l),m)h((v_{Q}m)(v_{Q}n),l)=h(n,(v_{R}l)(v_{R}m))$

\item $^{q}h(h(p,q^{-1})^{-1},r)=$ $^{p}h(q,h(p^{-1},r))$ $%
^{r}h(p,h(q,r^{-1})^{-1})$

\item \ \

$%
\begin{array}{ccc}
\lambda _{L}h(p,m) & = & h(p,v_{R}m) \\
\lambda _{L}h(n,r) & = & h(v_{P}n,r) \\
\lambda _{M}h(q,l) & = & h(q,v_{R}l) \\
\lambda _{M}h(n,r) & = & h(v_{Q}n,r) \\
\lambda _{N}h(p,m) & = & h(p,v_{Q}m) \\
\lambda _{N}h(q,l) & = & h(v_{P}l,q)^{-1}%
\end{array}%
$

\item \ \

$%
\begin{array}{ccc}
h(v_{Q}m,l) & = & h(v_{P}l,m)^{-1} \\
h(n,v_{R}l) & = & h(v_{Q}n,l) \\
h(n,v_{R}m) & = & h(v_{P}n,m)%
\end{array}%
$
\end{enumerate}

for all $l\in L,$ $m\in M,$ $n\in N,$ $p\in P,$ $q\in Q,$ $r\in R$.

\section*{Acknowledgements}

This works was partially supported by T\"{U}B\.{I}TAK (The Scientific and Technical Research Council of Turkey). \\
Project  Number : 107T542

\newpage

\author{Zekeriya Arvas\.{i}\\             
zarvasi@ogu.edu.tr      
\\ 
         Osmangazi University\\        
         Department of Mathematics and Computer Sciences \\
         Art and Science Faculty \\
        Eski\c{s}ehir/Turkey }
\bigskip

\author{Tufan Sait Kuzpinari   \\         
stufan@dpu.edu.tr     
\\    
         Aksaray University\\
         Department of Mathematics\\
         Art and Science Faculty\\
         Aksaray/Turkey}
\bigskip

\author{Enver \"{O}nder Uslu \\           
euslu@aku.edu.tr       
\\    
         Afyon Kocatepe University\\
         Department of Mathematics\\
         Art and Science Faculty \\
         Afyonkarahisar/Turkey}


\begin{thebibliography}{99}
\bibitem{patron3} \textsc{Z. Arvasi} and \textsc{T.Porter}, Higher
dimensional Peiffer elements in simplicial commutative algebras, \emph{%
Theory and Applications of Categories}, 3, 1-23, (1997).

\bibitem{zarvasi} \textsc{Z. Arvasi} and \textsc{E. Ulualan}, On algebraic
models for homotopy 3-types, \emph{Journal of Homotopy and Related Structures%
}, Vol. \textbf{1(1)}, 1-27, (2006).

\bibitem{AA} \textsc{I. Ak\c{c}a} and \textsc{Z. Arvas\.{i}}, \textrm{%
Simplicial and crossed Lie algebras}, \emph{Homology, Homotopy and
Applications}, Vol. \textbf{4}, No.1, 43-57, (2002).

\bibitem{baues} \textsc{H.J. Baues}, Combinatorial homotopy and
4-dimensional complexes, \emph{Walter de Gruyter}, (1991).

\bibitem{baues2} \textsc{H.J. Baues}, Homotopy types, \emph{Handbook of Algebraic Topology},
Edited by I. M. James, 1-72, (1995).

\bibitem{bourn} \textsc{D. Bourn}, Moore normalization and Dold-Kan theorem
for semiabelian categories, Proceedings of the conference Categories
in Algebra, Geometry and Mathematical Physics, \emph{Contemporary
Mathematics} vol.431, July 2007.

\bibitem{brown} \textsc{R. Brown} and \textsc{N.D. Gilbert}, \textrm{%
Algebraic models of 3-types and automorphism structures for crossed
modules}, \emph{Proc. London Math.Soc.}, (\textbf{3}), \textbf{59},
51-73, (1989).

\bibitem{bl1} \textsc{R. Brown} and \textsc{J.-L. Loday}, Van Kampen
theorems for diagram of spaces, \emph{Topology}, \ \textbf{26},
311-335, (1987).

\bibitem{c} \textsc{P. Carrasco}, Complejos hipercruzados, cohomologia y
extensiones, \emph{Ph.D. Thesis}, \ Universidad de Granada, (1987).

\bibitem{cc} \textsc{P. Carrasco} and \textsc{A.M. Cegarra}, Group-theoretic
algebraic models for homotopy types, \ \emph{Journal of  Pure and
Applied Algebra}, \ \textbf{75}, 195-235, (1991).

\bibitem{ladra} \textsc{J.L. Castiglioni} and \textsc{M. Ladra}, \textrm{%
Peiffer elements in simplicial groups and algebras}, \emph{Journal
of Pure and Applied Algebra}, \ \textbf{212}, 2115-2128, (2008).

\bibitem{conduche} \textsc{D. Conduch\'{e}}, Modules crois\'{e}s g\'{e}n\'{e}%
ralis\'{e}s de longueur 2, \emph{Journal of  Pure and Applied
Algebra}, \ \textbf{34}, 155-178, (1984) .

\bibitem{cond} \textsc{D. Conduch\'{e}}, Simplicial crossed modules and
mapping cones, \emph{Georgian Mathematical Journal}, \ \textbf{ 10},%
 623-636, (2003).

\bibitem{curtis} \textsc{E.B. Curtis}, Simplicial homotopy theory, \ \emph{%
Adv. in Math.}, \textbf{6}, 107-209, (1971).

\bibitem{duskin} \textsc{J. Duskin}, Simplicial methods and the
interpretation of triple cohomology, \emph{Memoir A.M.S.}, \ Vol. 3 \ \textbf{%
163}, (1975).

\bibitem{e1} \textsc{G.J. Ellis}, Crossed modules and their higher
dimensional analogues, \ \emph{Ph.D. Thesis}, \ U.C.N.W. \ (1984).

\bibitem{es} \textsc{G.J. Ellis} and \textsc{R.Steiner},\ Higher dimensional
crossed modules and the homotopy groups of (n+1)-ads, \
\emph{Journal of  Pure and Applied Algebra}, \ \textbf{46}, 117-136,
(1987).

\bibitem{glenn} \textsc{P.Glenn}, \ Realization of cohomology classes in
arbitrary exact categories, \ \emph{Journal of  Pure and Applied
Algebra}, \ \textbf{25}, 33-107, (1982).

\bibitem{wl} \textsc{D.Guin-Wal\'{e}ry} and \textsc{J.-L. Loday},
Obstructions \`{a} l'excision en K-th\'{e}orie alg\'{e}brique, \emph{%
Springer Lecture Notes in Math.,} \ \textbf{854}, 179-216, (1981).

\bibitem{jl} \textsc{J.-L. Loday}, Spaces with finitely many non-trivial
homotopy groups, \emph{Journal of Pure and Applied Algebra}, \
\textbf{24}, 179-202,\ (1982).

\bibitem{may} \textsc{J.P. May}, Simplicial objects in algebraic topology, \
\emph{Van Nostrand, Math. Studies 11.}

\bibitem{amut4} \textsc{A. Mutlu} and \textsc{T. Porter}, Applications of
Peiffer pairing in the Moore complex of a simplicial group,
\emph{Theory and Applications of Categories}, Volume 4, No. 7,
148-173 (1998).

\bibitem{p6} \textsc{T. Porter}, n-Types of simplicial groups and crossed
n-cubes, \emph{Topology}, \textbf{32}, 5-24, (1993).

\bibitem{robert} \textsc{D. M. Roberts} and \textsc{U. Schreiber}, The Inner automorphism 3-group of a strict
2-group, \emph{Journal of Homotopy and Related Structures}, Vol. 3,
193-244, (2008).

\bibitem{jh1} \textsc{J.H.C. Whitehead}, Combinatorial homotopy I and
II, \emph{Bull. Amer. Math. Soc.}, \ \textbf{55}, 213-245 and
496-543, (1949).
\end{thebibliography}
\end{document}